\journal{Journal of Combinatorial Theory, Series A}
\newcounter{ENUM}
\newcommand{\be}{\begin{enumerate}}
\newcommand{\ee}{\end{enumerate}}
\newcommand{\beas}{\begin{eqnarray*}}
\newcommand{\eeas}{\end{eqnarray*}}
\newcommand{\bea}{\begin{eqnarray}}
\newcommand{\eea}{\end{eqnarray}}
\newcommand{\beq}{\begin{equation}}
\newcommand{\eeq}{\end{equation}}
\newcommand{\st}{\,:\,}
\newcommand{\pfn}{\mathrm{PF}_n}
\newcommand{\wipfn}{\mathrm{WIPF}_n}
\newcommand{\pfnrk}{\mathrm{PF}_n^{(r,k)}}
\newcommand{\wipfnrk}{\mathrm{WIPF}_n^{(r,k)}}
\newcommand{\pflad}{\tilde{F}^{(r)}_\lambda}
\newcommand{\fnr}{F_n^{(r)}}
\newcommand{\fnrk}{F_n^{(r,k)}}
\newcommand{\fnrx}{F_n^{(r,k)}}
\newcommand{\cprk}{\mathcal{P}^{(r,k)}(t)}
\newcommand{\cpr}{\mathcal{P}^{(r)}(t)}
\newcommand{\sn}{\mathfrak{S}_n}
\newtheorem{thm}{Theorem}[section]
\newtheorem{prop}[thm]{Proposition}
\newtheorem{lem}[thm]{Lemma}
\newdefinition{defn}[thm]{Definition}
\newdefinition{ques}[thm]{Question}
\newdefinition{ex}[thm]{Example}
\newdefinition{sit}[thm]{Situation}
\newdefinition{case}{Case}
\newdefinition{notn}[thm]{Notation}
\newdefinition{rem}[thm]{Remark}
\newdefinition{warn}[thm]{Warning}
\newproof{proof}{Proof}
\numberwithin{equation}{section}
\newcommand{\bm}[1]{{\boldsymbol{#1}}}
\def\arxiv#1{\href{https://arxiv.org/abs/#1}{arXiv:#1}}
\def\zz{\mathbb{Z}}
\def\pp{\mathbb{P}}
\newcommand{\qq}{\mathbb{Q}}
\newcommand{\bmu}{\bm{u}}
\begin{document}

\begin{frontmatter}

\title{Some Aspects of $(r,k)$-Parking Functions}


\author[mymainaddress,mysecondaryaddress]{Richard P. Stanley\fnref{myfootnote}}
\ead{rstan@math.mit.edu}
\fntext[myfootnote]{The first author was partially supported by NSF grant  DMS-1068625.}

\author[mysecondaryaddress]{Yinghui Wang\corref{mycorrespondingauthor}}
\cortext[mycorrespondingauthor]{Corresponding author}
\ead{yinghui@alum.mit.edu}

\address[mymainaddress]{Department of Mathematics, University of Miami, Coral Gables, FL 33124, United States}

\address[mysecondaryaddress]{Department of Mathematics, Massachusetts Institute of Technology, \\Cambridge, MA 02139, United States}

\begin{abstract}
An \emph{$(r,k)$-parking function} of length $n$ may be defined as a sequence $(a_1,\dots,a_n)$ of positive integers whose increasing rearrangement $b_1\leq\cdots\leq b_n$ satisfies $b_i\leq k+(i-1)r$. The case $r=k=1$ corresponds to ordinary parking functions. We develop numerous properties of $(r,k)$-parking functions. In particular, if $F_n^{(r,k)}$ denotes the Frobenius characteristic of the action of the symmetric group $\mathfrak{S}_n$ on the set of all $(r,k)$-parking functions of length $n$, then we find a combinatorial interpretation of the coefficients of the power series $\left( \sum_{n\geq 0}F_n^{(r,1)}t^n\right)^k$ for any $k\in \mathbb{Z}$. When $k>0$, this power series is just $\sum_{n\geq 0} F_n^{(r,k)} t^n$; when $k<0$, we obtain a dual to $(r,k)$-parking functions. We also give a $q$-analogue of this result. For fixed $r$, we can use the symmetric functions $F_n^{(r,1)}$ to define a multiplicative basis for the ring $\Lambda$ of symmetric functions. We investigate some of the properties of this basis.
\end{abstract}

\begin{keyword}
parking function \sep symmetric function \sep $q$-analogue \sep dual parking function \sep parking function basis
\end{keyword}

\end{frontmatter}


\section{Introduction}\label{sec1}
Parking functions were first defined by Konheim and Weiss as
follows. Let $n$ be a fixed positive integer. We have $n$ cars $C_1,\dots,C_n$ and $n$ parking spaces
$1,2,\dots,n$. Each car $C_i$ has a preferred space $a_i$. The cars go
one at a time in order to their preferred space. If it is empty they
park there; otherwise they park at the next available space (in
increasing order). If all the cars are able to park, then the sequence
$\alpha=(a_1,\dots,a_n)$ is called a \emph{parking function} of length
$\ell(\alpha)=n$. For instance, $(3,1,4,3)$ is not a parking function
since the last car will go to space 3, but spaces 3 and 4 are already
occupied. It is easy to see that $(a_1,\dots,a_n)\in [n]^n$ (where
$[n]=\{1,2,\dots,n\}$) is a parking function if and only if its
increasing rearrangement $b_1\leq b_2\leq \cdots \leq b_n$ satisfies
$b_i\leq i$.

Let $\pfn$ denote the set of all parking functions of length $n$. A
fundamental result of Konheim and Weiss \cite{k-w} (earlier proved in
an equivalent form by Steck \cite{steck}---see Yan \cite[\textsection
  1.4]{yan} for a discussion) states that
$\#\pfn=(n+1)^{n-1}$. An elegant proof of this result was given by
Pollak (reported in \cite{riordan}), which we now sketch since it will
be generalized later. Suppose that we have the same $n$ cars, but now
there are $n+1$ spaces $1,2,\dots,n+1$. The spaces are arranged on a
circle. The cars follow the same algorithm as before, but once a car
reaches space $n+1$ and is unable to park, it can continue around the
circle to spaces $1,2,\dots$ until it can finally park. Of course all
the cars can park this way, so at the end there will be one empty
space. Note that their preferences $(a_1,\dots,a_n)\in[n+1]^n$ will be
a parking function if and only if the empty space is $n+1$. If the
empty space is $e$ and the preferences are changed to
$(a_1+i,\dots,a_n+i)$ for some $i$, where $a_j+i$ is taken modulo
$n+1$ so that $a_j+i\in[n+1]$, then the empty space becomes
$e+i$. Hence given $(a_1,\dots,a_n)\in [n+1]^n$, exactly one of the
vectors $(a_1+i,\dots,a_n+i)$ will be a parking function. It follows
that $\#\pfn=\frac{1}{n+1}(n+1)^n=(n+1)^{n-1}$.

We will use notation and terminology on symmetric functions from
\cite[Chap. 7]{ec2}.  The symmetric group $\sn$ acts on $\pfn$ by
permuting coordinates. Let $F_n \mathrel{\mathop:}=\mathrm{ch}\,\pfn$
denote the Frobenius characteristic of this action of $\sn$.
Recall from \cite[{\textsection}7.18, p.~351]{ec2} that 
the {\it Frobenius characteristic} $\mathcal{F}_{\varphi}$ of an action $\varphi$ of $\sn$ on a class of combinatorial
object $\mathcal{C}$ (e.g., $\pfn$) is
$$
\mathcal{F}_{\varphi}\ :=\ \frac{1}{\,n!\,}\sum_{w\in \sn} {\rm Fix}_{\varphi}(w) p_{\rho(w)},
$$
where ${\rm Fix}_{\varphi}(w)=\#\{c\in \mathcal{C}: {\varphi}(w).c=c\}$, 
$\rho(w)$ is the partition defined by the decreasingly
sorted lengths of cycles in $w$, and 
$p_{\lambda}$  is the power sum symmetric function indexed by partition ${\lambda}$.
Hence the Frobenius characteristic $F_n$ is a
homogeneous symmetric function of degree $n$, called the \emph{parking
  function symmetric function}. If $\alpha=(a_1,\dots,a_n)$ is a
sequence of positive integers with $m_i$ $i$'s (so $\sum m_i=n$), then
the Frobenius characteristic of the action of $\sn$ on the set of
permutations of the terms of $\alpha$ is the complete symmetric
function $h_{m_1}h_{m_2}\cdots$ (with $h_0=1$). Hence to compute
$F_n$, take all vectors $(b_1,\dots,b_n)\in \pfn$ with $b_1\leq
b_2\leq\cdots\leq b_n$ (the number of such vectors is the Catalan
number $C_n$) and add the corresponding $h_\lambda$ for each. For
instance, when $n=3$ the weakly increasing parking functions are 111,
112, 113, 122, 123, so $F_3= h_3+3h_2h_1+h_1^3$.

The symmetric function $F_n$ has many remarkable properties,
summarized (in a dual form, and with equation~\eqref{eq:fne} below not
included) in \cite[Exer.~7.48(f)]{ec2}. 

\begin{prop} \label{prop:fn}
We have
\begin{align}
 F_n\  = &\ \sum_{\lambda\vdash n} (n+1)^{\ell(\lambda)-1}
      z_\lambda^{-1}p_\lambda \nonumber\\  
      = &\
   \frac{1}{n+1}\sum_{\lambda\vdash n}s_\lambda(1^{n+1})s_\lambda   
    \nonumber  \\ 
    = &\ \frac{1}{n+1}\sum_{\lambda\vdash n}\left[ \prod_i
    \binom{\lambda_i+n}{\lambda_i}\right]m_\lambda \nonumber\\ 
    = &\
    \sum_{\lambda\vdash n}\frac{n(n-1)\cdots(n-\ell(\lambda)+2)}
    {d_1(\lambda)!\cdots d_n(\lambda)!}h_\lambda \nonumber 
    \\ 
   = &\ \sum_{\lambda\vdash n} \varepsilon_\lambda  
     \frac{(n+2)(n+3)\cdots (n+\ell(\lambda))}{d_1(\lambda)!
       \cdots d_n(\lambda)!}e_\lambda  \label{eq:fne} \\ 
   \omega F_n \ = &\ \frac{1}{n+1} \left[ \prod_i
  \binom{n+1}{\lambda_i}\right]m_\lambda \nonumber, 
\end{align}
where $d_i(\lambda)$ denotes the number of parts of $\lambda$ equal to
$i$ and $\varepsilon_\lambda=(-1)^{n-\ell(\lambda)}$.
Recall that 
$\omega$ is the involution defined by $\omega(e_n)=h_n$ for all $n\ge 1$, as in {\emph{\cite[\textsection7.6]{ec2}}}.  
Moreover,
  \beq F_n = \frac{1}{n+1}[t^n]H(t)^{n+1}, \label{eq:fnh2} \eeq
where $[t^n]f(t)$ denotes the coefficient of $t^n$ in the power series
$f(t)$, and
  $$ H(t) = \sum_{n\geq 0}h_nt^n= \frac{1}{(1-x_1t)(1-x_2t)\cdots}. $$
\end{prop}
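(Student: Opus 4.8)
The plan is to treat \eqref{eq:fnh2} as the master identity and to read off every expansion from it by extracting $[t^n]$ of $H(t)^{n+1}$ in the appropriate basis. First I would prove \eqref{eq:fnh2} by promoting Pollak's circular argument to the level of $\sn$-representations. Let $\sn$ act on $[n+1]^n$ by permuting coordinates. A sequence is fixed by $w\in\sn$ exactly when it is constant on the cycles of $w$, so $\mathrm{Fix}(w)=(n+1)^{\ell(w)}$ where $\ell(w)$ is the number of cycles; hence $\mathrm{ch}\,[n+1]^n=\sum_{\lambda\vdash n}(n+1)^{\ell(\lambda)}z_\lambda^{-1}p_\lambda$. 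Since $H(t)=\exp\left(\sum_{k\ge1}p_kt^k/k\right)$ gives $H(t)^{n+1}=\exp\left((n+1)\sum_{k\ge1}p_kt^k/k\right)$, extracting $[t^n]$ recovers exactly this expression, so $\mathrm{ch}\,[n+1]^n=[t^n]H(t)^{n+1}$. Now the cyclic shift $\alpha\mapsto\alpha+(1,\dots,1)$ (mod $n+1$) acts freely on $[n+1]^n$ and commutes with $\sn$, and by Pollak's observation $\pfn$ is an $\sn$-stable transversal for it: each coset $i+\pfn$ is $\sn$-stable (because adding a constant vector commutes with permuting coordinates) and is $\sn$-equivariantly isomorphic to $\pfn$ via $\alpha\mapsto i+\alpha$. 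Thus $[n+1]^n\cong(n+1)\cdot\pfn$ as $\sn$-sets, so $(n+1)F_n=\mathrm{ch}\,[n+1]^n=[t^n]H(t)^{n+1}$, which is \eqref{eq:fnh2}; dividing the displayed power-sum identity by $n+1$ gives the first expansion at once.

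Next I would extract the remaining classical expansions. For the Schur expansion, specialize the Cauchy identity $\prod_{i,j}(1-x_iy_j)^{-1}=\sum_\lambda s_\lambda(x)s_\lambda(y)$ at $y_1=\cdots=y_{n+1}=t$ (the rest zero): the left side becomes $H(t)^{n+1}$ and $s_\lambda(t,\dots,t)=t^{|\lambda|}s_\lambda(1^{n+1})$, so $[t^n]$ yields $\sum_{\lambda\vdash n}s_\lambda(1^{n+1})s_\lambda$. For the monomial expansion, expand $H(t)^{n+1}=\prod_i(1-x_it)^{-(n+1)}=\prod_i\sum_{k\ge0}\binom{n+k}{k}x_i^kt^k$ and collect by exponent partition, giving $\prod_i\binom{n+\lambda_i}{\lambda_i}$ as the coefficient of $m_\lambda$. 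Applying $\omega$ termwise sends $H(t)$ to $E(t)=\prod_i(1+x_it)$, and the same expansion of $E(t)^{n+1}$ produces the stated formula for $\omega F_n$.

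For the $h$-expansion I would write $H(t)^{n+1}=\left(\sum_{k\ge0}h_kt^k\right)^{n+1}$ and count compositions of $n$ into $n+1$ nonnegative parts whose nonzero parts form $\lambda$ (with $\ell=\ell(\lambda)$ parts and multiplicities $d_i$): there are $\tfrac{(n+1)!}{(n+1-\ell)!\,d_1!\cdots d_n!}$ of them. Dividing by $n+1$ turns $\tfrac{(n+1)!}{(n+1-\ell)!}$ into $n(n-1)\cdots(n-\ell+2)$, matching the stated coefficient.

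The one genuinely new identity is the $e$-expansion \eqref{eq:fne}, where the real work lies. The plan is to exploit $H(t)E(-t)=1$, so that $H(t)^{n+1}=E(-t)^{-(n+1)}=\left(1-W(t)\right)^{-(n+1)}$ with $W(t)=e_1t-e_2t^2+e_3t^3-\cdots$. Expanding by the generalized binomial theorem, $\left(1-W\right)^{-(n+1)}=\sum_{m\ge0}\binom{n+m}{m}W(t)^m$, then expanding $W(t)^m$ multinomially and collecting the monomials $e_{k_1}\cdots e_{k_m}$ by their partition $\lambda$ (so $m=\ell(\lambda)$, contributing a factor $\tfrac{\ell!}{d_1!\cdots d_n!}$ and an overall sign $(-1)^{n-\ell}$ from $\prod_i(-1)^{k_i-1}$), I obtain the coefficient of $e_\lambda$ in $[t^n]H(t)^{n+1}$ as $(-1)^{n-\ell}\binom{n+\ell}{\ell}\tfrac{\ell!}{d_1!\cdots d_n!}$. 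The main obstacle is purely bookkeeping: tracking the alternating signs from $E(-t)$ and simplifying $\tfrac{1}{n+1}\binom{n+\ell}{\ell}\ell!=\tfrac{(n+\ell)!}{(n+1)!}=(n+2)(n+3)\cdots(n+\ell)$, after which the coefficient becomes $\varepsilon_\lambda\tfrac{(n+2)(n+3)\cdots(n+\ell)}{d_1!\cdots d_n!}$, exactly \eqref{eq:fne}. As a sanity check, at $n=3$ this gives $F_3=e_3-5e_1e_2+5e_1^3$, which reduces via $e_2=h_1^2-h_2$ and $e_3=h_3-2h_1h_2+h_1^3$ to $h_3+3h_1h_2+h_1^3$, the value computed in the introduction.
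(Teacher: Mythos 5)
Your proof is correct, and it follows essentially the same route the paper takes: the paper states Proposition~\ref{prop:fn} by citation to \cite[Exer.~7.48(f)]{ec2}, but its proof of the generalization (Theorem~\ref{thm:cprk}) rests on exactly your key step — the cyclic-shift/Pollak argument showing that $\pfn$ is a transversal for a free $\mathbb{Z}/(n+1)$-action commuting with $\sn$, hence $(n+1)F_n=\mathrm{ch}\,[n+1]^n=[t^n]H(t)^{n+1}$ — followed by what it calls ``routine symmetric function manipulation.'' Your write-up simply makes those routine extractions (Cauchy specialization for the Schur expansion, $\prod_i(1-x_it)^{-(n+1)}$ for $m_\lambda$, compositions for $h_\lambda$, and $E(-t)^{-(n+1)}$ with the sign bookkeeping for $e_\lambda$) explicit, and all of them check out.
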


Note in particular that the coefficient of $h_\lambda$ in
equation~\eqref{eq:fnh2} is the number of weakly increasing parking
functions of length $n$ whose entries occur with multiplicities
$\lambda_1, \lambda_2,\dots$. 

A further important property of $F_n$, an immediate consequence of
equation~\eqref{eq:fnh2} and the Lagrange inversion formula, is the
following. Let  
  \beq E(t) =\sum_{n\geq 0}e_nt^n=\prod_i (1+x_it), \label{eq:et}
  \eeq
and let $G(t)^{\langle -1\rangle}$ denote the compositional inverse of
the power series $G(t)$ (which will exist as a formal power series if
$G(t)=a_1t + a_2t^2+\cdots$, where $a_1\neq 0$). Then
$$ \sum_{n\geq 0}F_n t^{n+1}=(tE(-t))^{\langle -1\rangle}.$$

There are several known generalizations of parking functions. In
particular, if $\bmu=(u_1,\dots,u_n)$ is a weakly increasing sequence
of positive integers, then a $\bmu$-\emph{parking function} is a
sequence $(a_1,\dots,a_n)\in\pp^n$ (where $\pp=\{1,2,\dots\}$) such
that its increasing rearrangement $b_1\leq b_2\leq\cdots\leq b_n$
satisfies $b_i\leq u_i$. Thus an ordinary parking function corresponds
to $\bmu=(1,2,\dots,n)$. For the general theory of $\bmu$-parking
functions, see the survey \cite[{\textsection}13.4]{yan}. We will be
interested here in the special case
$\bmu=(k,r+k,2r+k,\dots,(n-1)r+k)$, where $r$ and $k$ are positive integers. We call such a
$\bmu$-parking function an $(r,k)$-\emph{parking function}. With this
terminology, an ordinary parking function is a $(1,1)$-parking
function. We call an $(r,1)$-parking function simply an
$r$-\emph{parking function}.

\textsc{Note.} Our terminology is not universally used. For instance,
if $(a_1,\dots,a_n)$ is what we call an $(r,r)$-parking function, then
Bergeron \cite{berg} would call $(a_1-1,\dots,a_n-1)$ an $r$-parking
function.

Pollak's proof that $\#\pfn=(n+1)^{n-1}$ extends easily to
$(r,k)$-parking functions. Namely, we now have $rn$ cars
$C_1,\dots,C_{rn}$ and $rn+k-1$ spaces $1,2,\dots,rn+k-1$. We consider
preferences $\alpha=(a_1,\dots,a_n)$, $1\leq a_i\leq rn+k-1$, where
cars $C_{r(i-1)+1},\dots,C_{ri}$ all prefer $a_i$. The cars use the
same parking algorithm as before. It is not hard to check that all the
cars can park if and only if $\alpha$ is an $(r,k)$-parking
function. Now arrange $rn+k$ spaces on a circle, allow the preferences
$1\leq a_i\leq rn+k$, and park as in Pollak's proof. Then $\alpha$ is
an $(r,k)$-parking function if and only if the space $rn+k$ is
empty. Reasoning as in Pollak's proof gives the following result,
which in an equivalent form is due to Steck \cite{steck}.

\begin{thm} \label{thm:genpol}
Let $\pfnrk$ denote the set of $(r,k)$-parking functions of length
$n$. Then
  $$ \#\pfnrk = k(rn+k)^{n-1}. $$
\end{thm}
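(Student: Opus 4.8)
The plan is to extend Pollak's circular-parking argument, exactly as the paragraph preceding the theorem already sets up. The setup is as follows: we take $rn$ cars $C_1,\dots,C_{rn}$ grouped into $n$ blocks of size $r$, where the cars in block $i$ (namely $C_{r(i-1)+1},\dots,C_{ri}$) all share the preference $a_i$. We arrange $rn+k$ spaces $1,2,\dots,rn+k$ on a circle and allow preferences $a_i\in[rn+k]$, so there are $(rn+k)^n$ preference vectors $\alpha=(a_1,\dots,a_n)$ in total. Every such vector lets all $rn$ cars park (since there are more spaces than cars on the circle), leaving exactly $k$ empty spaces at the end.

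First I would verify the claim, asserted in the excerpt, that $\alpha$ is an $(r,k)$-parking function (in the linear, non-circular sense) if and only if the $k$ empty spaces are precisely the top $k$ spaces $rn+1,rn+2,\dots,rn+k$; equivalently, no car ever needs to wrap around the circle. This is the analogue of the condition ``the empty space is $n+1$'' in Pollak's original argument, and it is where the combinatorial content lives. Next I would invoke the cyclic-shift symmetry: for a fixed $\alpha$, shifting all preferences by $j$ modulo $rn+k$ (replacing each $a_i$ by $a_i+j$, reduced into $[rn+k]$) cyclically shifts the whole configuration, so the set of empty spaces is shifted by $j$ as well. Thus the $rn+k$ vectors in each cyclic-shift orbit distribute their ``empty-space patterns'' as rotations of a single pattern.

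The key counting step is then a double count. For each of the $(rn+k)^n$ preference vectors we get a set of $k$ empty spaces among the $rn+k$ circular positions. I want to count how many vectors yield the specific empty-set $\{rn+1,\dots,rn+k\}$; by the cyclic symmetry this count is independent of which rotation of that particular empty-pattern we ask for. The subtlety is that, unlike Pollak's case $k=1$, the $k$ empty spaces need not be consecutive, so the $rn+k$ cyclic shifts of a given vector do not realize $rn+k$ distinct empty-sets in general — rotations can coincide or the empty spaces can be spread out. The cleanest route is to count, over all vectors, the total number of (vector, cyclic rotation) incidences that land the empty set on the target block $\{rn+1,\dots,rn+k\}$, and to show each vector contributes exactly $k$ such rotations, because exactly $k$ of its $rn+k$ rotations place an empty space at position $rn+k$ while keeping the empty set inside the linear region. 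Equivalently, $\sum_\alpha k = k(rn+k)^n$, and dividing by the orbit size $rn+k$ gives $\#\pfnrk = k(rn+k)^{n-1}$.

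The main obstacle I anticipate is precisely pinning down the ``contributes exactly $k$ rotations'' claim rigorously, i.e.\ showing that among the $rn+k$ cyclic shifts of any fixed $\alpha$ exactly $k$ of them are genuine $(r,k)$-parking functions. In Pollak's $k=1$ case this is trivial (exactly one shift works), but here one must argue that the empty set of any circular configuration, after suitable rotation, can be slid so that its ``last'' empty slot sits at $rn+k$ in exactly $k$ distinct ways that simultaneously force all empties into $\{rn+1,\dots,rn+k\}$ — and crucially that these $k$ rotations are always distinct. I would handle this by ordering the $k$ empty spaces cyclically and observing that rotating each empty slot in turn to the fixed position $rn+k$ gives $k$ candidate shifts, then checking that the linear parking condition $b_i\le (i-1)r+k$ holds for exactly those $k$. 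Once this bijection-with-multiplicity $k$ between cyclic orbits and $(r,k)$-parking functions is established, the count follows immediately and the rest is bookkeeping.
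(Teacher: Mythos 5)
There is a genuine error at the heart of your plan: the characterization you propose to ``verify'' in your first step is false. With $rn$ cars and $rn+k$ spaces on a circle, a preference vector $\alpha$ is an $(r,k)$-parking function if and only if the single space $rn+k$ ends up empty --- \emph{not} if and only if the $k$ empty spaces are exactly $\{rn+1,\dots,rn+k\}$. (Being an $(r,k)$-parking function is equivalent to the linear process on the $rn+k-1$ spaces $1,\dots,rn+k-1$ succeeding, i.e.\ to no car ever reaching space $rn+k$; the remaining $k-1$ empty spaces can sit anywhere.) A concrete counterexample with $r=1$, $k=2$, $n=2$: the vector $(2,2)$ is a $(1,2)$-parking function ($b_1=2\le 2$, $b_2=2\le 3$), yet on the $4$-cycle the two cars occupy spaces $2$ and $3$, leaving $\{1,4\}$ empty rather than $\{3,4\}$. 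Likewise $(1,3)$ is a parking function leaving $\{2,4\}$ empty, a set that is not even a cyclic interval, so no rotation of it ever lands on your target block. Consequently your double count --- counting (vector, rotation) incidences that place the empty set exactly on $\{rn+1,\dots,rn+k\}$ --- counts a strictly smaller family and cannot yield $\#\mathrm{PF}_n^{(r,k)}$; and the claim you flag as the ``main obstacle,'' that exactly $k$ rotations force all empties into the top block, is simply not true.

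The repair is exactly the paper's, and it makes the obstacle evaporate: replace your condition by ``space $rn+k$ is empty.'' The cyclic shift by $i$ translates the $k$-element empty set by $i$, and as $i$ runs over $0,1,\dots,rn+k-1$ the position $rn+k$ lies in the translated empty set for exactly $k$ values of $i$ --- one for each element of the empty set. Since the shift action on $[rn+k]^n$ is free, every orbit has size $rn+k$ and contains exactly $k$ parking functions, giving $\#\mathrm{PF}_n^{(r,k)}=\frac{k}{rn+k}\,(rn+k)^n=k(rn+k)^{n-1}$. No distinctness or interval-structure argument about the empty set is needed.
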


The results in Proposition~\ref{prop:fn} can be extended to
$(r,k)$-parking functions (Theorem~\ref{thm:cprk}). For the case $k=r$, most of them
appear in Bergeron \cite[Prop.~1]{berg}. 
(Bergeron
and his collaborators have gone on to generalize their results in a
series of papers \cite{a-b, a-b2, berg2} on rectangular parking functions.) 
One of our
key results (Theorem~\ref{thm:relation}) connects $r$-parking
functions to $(r,k)$-parking functions as follows.

Let $\pfnrk$ denote the set of all $(r,k)$-parking functions of length
$n$, and let $\fnrk$ denote the Frobenius characteristic
$\mathrm{ch}\,\pfnrk$ of the action of $\sn$ on $\pfnrk$ by permuting
coordinates. 
Define
\begin{align*}
\cprk  &=  \sum_{n\geq 0} \fnrk t^n\\
         \cpr  &=  \mathcal{P}^{(r,1)}(t), 
\end{align*}  
then (Theorem~\ref{thm:relation})
  \beq \cpr^k=\cprk. \label{eq:cprk} \eeq
Equation~\eqref{eq:cprk} suggests looking at $\cpr^k$ for negative
integers $k$. We obtain parking function interpretations of the
coefficients of such power series in Section~\ref{sec:dual}.
As some motivation for what to expect, consider two power series 
$A(t),B(t)$, with $B(0)=0$, that are related by
 $$ A(t)=\frac{1}{1-B(t)}=1+B(t)+B(t)^2+\cdots. $$
Thus
  \beq B(t) = 1-\frac{1}{A(t)}, \label{eq:ba} \eeq
and often $B(t)$ will be a generating function for certain ``prime''
objects, while $A(t)$ will be a generating function for all objects,
i.e., products of primes. See for instance
\cite[Prop.~4.7.11]{ec1}. We will see examples of this relationship 
with our generating functions for parking functions.

For instance, if we set
  \beq  \mathcal{P}^{(r,k)}(t)^{-1} = 1-\sum_{n\geq 1}
   G_n^{(r,k)}t^n, \label{eq:P^{-1}} \eeq
then $G_n^{(1,1)}$ is the Frobenius characteristic of the action of
$\sn$ on \emph{prime} parking functions of length $n$, i.e., parking
functions that remain parking functions when some term equal to 1 is
deleted (a concept due to Gessel, private communication, 1997; see \cite[Exer.~5.49(f)]{ec2}). An
increasing parking function $b_1 b_2\cdots b_n$ can be uniquely
factored $\beta_1 \cdots \beta_k$, such that (1) if $b_j$ is the first
term of $\beta_i$ then $b_j=j$, and (2) if we subtract from each term
of $\beta_i$ one less than its first element (so it now begins with a
1), then we obtain a prime parking function.  

As a direct generalization of the previous example, $G_n^{(r,1)}$ is
the Frobenius characteristic of the action of $\sn$ on sequences $a_1
a_2\cdots a_n$ such that some $a_i=1$, and if remove this term then we
obtain an $(r,r)$-parking function.  More generally, if $1\leq k\leq
r$ then $G_n^{(r,k)}$ is the Frobenius characteristic of the action of
$\sn$ on sequences $a_1 a_2\cdots a_n$ such that we can remove some
term less than $k+1$ and obtain an $(r,r)$ parking function
(Theorem~\ref{thm:easyG}). For instance, when $r=2$ and $n=3$ the
increasing sequences with this property are 111, 112, 113, 114, 122,
123, 124, 222, 223, 224. Hence $G_3^{(2,2)}= 2h_1^3+6h_2h_1+2h_3$. The
situation for $\mathcal{P}^{(r,k)}(t)^{-j}$ when $j>r$ is more
complicated (Theorem~\ref{thm:G}).

\section{Expansions of $F_n^{(r,k)}$} \label{sec2}

In this section we consider the expansion of $F_n^{(r,k)}$ into the
six classical bases for symmetric functions.  These expresssions are
defined even when $k$ is an indeterminate, so we can use any of them
to define $\fnrk$ in this situation. For later combinatorial
applications we will only consider the case when $k$ is an integer.
We use notation from \cite[Ch.~7]{ec2} regarding symmetric
functions. We also use multinomial coefficient notation such as
 $$ \binom{k}{d_1,\dots,d_n,k-\sum d_i} = 
   \frac{k(k-1)\cdots(k-\sum d_i+1)}{d_1!\cdots d_n!}, $$
where $d_1,\dots,d_n$ are nonnegative integers and $k$ may be an
indeterminate. As usual we abbreviate $\binom{k}{d,k-d}$ as $\binom
kd$. 

\begin{thm} \label{thm:cprk}
Recall that $d_i(\lambda)$ denotes the number of parts of $\lambda$
equal to $i$. Then $F_0^{(r,k)}=1$, and for $n\geq 1$ we have
\begin{align}
F_n^{(r,k)}  = &\
 \frac{k}{rn+k} \sum_{\lambda \vdash  n} \binom{rn+k}{d_1(\lambda),
     \dots, d_n(\lambda),rn+k-\ell(\lambda)} h_{\lambda}
 \label{eq:hlambda}\\   
 = &\ 
   \frac{k}{rn+k}\sum_{\lambda\vdash n}\varepsilon_\lambda
    \binom{rn+k+\ell(\lambda)-1}{d_1(\lambda),\dots,d_n(\lambda),
      rn+k-1}e_\lambda \label{eq:elambda}\\ 
      = &\ 
 \frac{k}{rn+k} \sum_{\lambda \vdash  n} \left[ \prod_i
   \binom{\lambda_i+rn+k-1}{\lambda_i}\right] m_{\lambda} 
 \nonumber\\ 
 = &\
  \frac{k}{rn+k} \sum_{\lambda \vdash  n}
    s_{\lambda}(1^{rn+k})s_{\lambda} 
  \nonumber\\ 
  = &\
   k \sum_{\lambda \vdash  n} z_\lambda^{-1}(rn+k)^{\ell(\lambda)-1}
   p_{\lambda} \label{eq:plambda}\\ 
 \omega F^{(r,k)}_n
 = &\ 
  \frac{k}{rn+k} \sum_{\lambda \vdash  n} \left[ \prod_i
    \binom{rn+k}{\lambda_i}\right]  m_{\lambda}. \nonumber
\end{align} 
Moreover,
 \begin{equation}
    F^{(r,k)}_n = \frac{k}{rn+k} [t^n]H(t)^{rn+k}. \label{eq:[t^n]H}
\end{equation}
\end{thm}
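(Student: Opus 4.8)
The plan is to prove all the identities in Theorem~\ref{thm:cprk} by first establishing the single master formula~\eqref{eq:[t^n]H}, namely $F_n^{(r,k)} = \frac{k}{rn+k}[t^n]H(t)^{rn+k}$, and then deriving each of the six basis expansions from it by purely algebraic manipulation. The reason for this order is that equation~\eqref{eq:[t^n]H} is the direct analogue of the known formula~\eqref{eq:fnh2} for ordinary parking functions, and it packages the combinatorics cleanly; once it is in hand, the individual expansions follow by extracting coefficients of $H(t)^{rn+k}$ in the various classical bases.

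First I would establish~\eqref{eq:[t^n]H} combinatorially. Following the remark after Proposition~\ref{prop:fn}, the coefficient of $h_\lambda$ in $F_n^{(r,k)}$ should be the number of weakly increasing $(r,k)$-parking functions of length $n$ whose entries occur with multiplicities $\lambda_1,\lambda_2,\dots$; this is because the Frobenius characteristic of the $\sn$-action on the orbit of a sequence with these multiplicities is exactly $h_\lambda$. So the real content is to show that the coefficient of $h_\lambda$ in $\frac{k}{rn+k}[t^n]H(t)^{rn+k}$ counts weakly increasing $(r,k)$-parking functions of multiplicity type $\lambda$. I would expand $H(t)^{rn+k} = \prod_i (1-x_it)^{-(rn+k)}$ and read off the coefficient of a fixed monomial symmetric term; the multiplicities $d_i(\lambda)$ of the partition enter through a multinomial count of the ways to distribute the exponents, producing the factor $\binom{rn+k}{d_1(\lambda),\dots,d_n(\lambda),rn+k-\ell(\lambda)}$, and this is exactly formula~\eqref{eq:hlambda}. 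Thus~\eqref{eq:hlambda} and~\eqref{eq:[t^n]H} are really two forms of the same statement, and I would prove them together. The cleanest route is a cycle-lemma or Pollak-style argument: the factor $\frac{k}{rn+k}$ is precisely the probability that a uniformly shifted sequence lands in the parking region, exactly as in the circular argument sketched for Theorem~\ref{thm:genpol}, so I would adapt that bijective proof to track the multiplicity profile and thereby pin down the $h_\lambda$-coefficient.

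Having secured~\eqref{eq:hlambda}, the remaining five expansions are formal consequences obtained by re-expressing $[t^n]H(t)^{rn+k}$ in each basis. For the monomial expansion and its $\omega$-image I would use the standard generating-function identities $H(t)=\prod_i(1-x_it)^{-1}$ and $E(t)=\prod_i(1+x_it)$ to extract the coefficient of $m_\lambda$ directly, giving $\prod_i\binom{\lambda_i+rn+k-1}{\lambda_i}$ for $H^{rn+k}$ and $\prod_i\binom{rn+k}{\lambda_i}$ for $E^{rn+k}=\omega H^{rn+k}$. The Schur expansion follows from the dual Cauchy/Jacobi--Trudi evaluation $[t^n]H(t)^N = \frac1N\sum_{\lambda\vdash n}s_\lambda(1^N)s_\lambda$ with $N=rn+k$ (this is the principal-specialization identity used already in Proposition~\ref{prop:fn}). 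The power-sum expansion~\eqref{eq:plambda} comes from $\log H(t)=\sum_{m\geq1}p_m t^m/m$, so that $H(t)^{N}=\exp\!\big(N\sum p_m t^m/m\big)$ and extracting $[t^n]$ yields the factor $N^{\ell(\lambda)}z_\lambda^{-1}$; multiplying by $\frac{k}{N}=\frac{k}{rn+k}$ gives the stated $k(rn+k)^{\ell(\lambda)-1}z_\lambda^{-1}$. Finally, the elementary-symmetric expansion~\eqref{eq:elambda} follows by applying $\omega$ to the $h_\lambda$-formula together with the sign $\varepsilon_\lambda=(-1)^{n-\ell(\lambda)}$ and the binomial reindexing $\binom{N}{d,\dots}\mapsto\binom{N+\ell(\lambda)-1}{d,\dots}$ that converts $H$-coefficients into $E$-coefficients.

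I expect the main obstacle to be the combinatorial step establishing~\eqref{eq:hlambda}/\eqref{eq:[t^n]H} rigorously—specifically, justifying that the weighted circular (Pollak) argument tracks multiplicity types correctly and that the factor $\frac{k}{rn+k}$ emerges as the exact enumeration weight rather than merely a heuristic probability. The delicate point is that, unlike the $k=r$ case treated in Bergeron \cite{berg}, here $k$ and $r$ are independent, so I must verify that in the circular model with $rn+k$ spaces the condition ``space $rn+k$ is empty'' correctly isolates the $(r,k)$-parking functions and that exactly $k$ of the $rn+k$ cyclic shifts of any given preference vector land in the parking region, uniformly across multiplicity types. Once this enumeration is pinned down, the passage to the other bases is routine symmetric-function bookkeeping; the only mild care needed is in the elementary-symmetric case, where the binomial index shift and the sign $\varepsilon_\lambda$ must be matched exactly.
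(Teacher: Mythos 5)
Your proposal is correct and follows essentially the same route as the paper: both rest on the Pollak-style cycle lemma showing that each shift-equivalence class of $[rn+k]^n$ has $rn+k$ elements with a common multiplicity type, exactly $k$ of which are $(r,k)$-parking functions, so that $F_n^{(r,k)}=\frac{k}{rn+k}\,\mathrm{ch}\,[rn+k]^n$, after which everything else is routine basis conversion. The only cosmetic difference is that you anchor the computation in the $h_\lambda$ expansion \eqref{eq:hlambda}, whereas the paper first reads off the Schur expansion of $\mathrm{ch}\,[rn+k]^n$ from \cite[Exer.~7.75(a)]{ec2}.
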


\proof
Define two elements $\alpha$ and $\beta$
of $[rn+k]^n$ to be \emph{equivalent} if their difference is a multiple of
$(1,1,\dots,1)$ mod $rn+k$. This defines an equivalence relation on
$[rn+k]^n$, and each equivalence class contains $rn+k$ elements. It follows
from the proof of Theorem~\ref{thm:genpol} that each equivalence class
contains exactly $k$ $(r,k)$-parking functions. Moreover, all the
elements $\alpha$  in each equivalence class have the same multiset of part
multiplicities, i.e., the multiset $\{d_1,\dots,d_{rn+k}\}$, where
$d_i$ is the number of $i$'s in $\alpha$. 

For $n\geq 1$ let $D_n^{(r,k)}$ denote the Frobenius characteristic of
the action of $\sn$ on $[rn+k]^n$ by permuting coordinates. It follows
that
   $$ F_n^{(r,k)} =\frac{k}{rn+k}D_n^{(r,k)}. $$
Hence if we set $q=1,k=n$, and $n=rn+k$ in 
\cite[Exercise~7.75(a)]{ec2} then we get 
  $$ D_n^{(r,k)} = \sum_{\lambda\vdash
      n}s_\lambda(1^{rn+k})s_\lambda. $$
(Exercise 7.75 deals with $\mathfrak{S}_k$ acting on submultisets $M$
of $\{ 1^n,\dots,k^n\}$. Replace $M$ with the vector
$(d_1,\dots,d_k)$, where $d_i$ is the multiplicity of $i$ in $M$, to
get our formulation.) Therefore
  $$ F_n^{(r,k)} = \frac{k}{rn+k}\sum_{\lambda\vdash
      n}s_\lambda(1^{rn+k})s_\lambda. $$
The remainder of the proof is routine symmetric function
manipulation. 
\qed

\medskip

A further important property of $\fnrk$ in the case $k=r$, an immediate
consequence of equation \eqref{eq:[t^n]H} and the Lagrange inversion
formula \cite[Thm.~5.4.2]{ec2}, is the following.

Let $E(t)$ be given by equation~\eqref{eq:et}. Then
\begin{equation}\label{eq:inverse}
\sum_{n\ge 0} F^{(r,r)}_n t^{n+1} = 
  (tE(-t)^r)^{\langle-1\rangle} .
\end{equation}

\section{A relation between $r$-parking functions and $(r,k)$-parking  functions}
In this section we give a combinatorial proof of the following result.

\begin{thm} \label{thm:relation}
Let $k,r\in\pp$. Then $\cpr^k=\cprk$.
\end{thm}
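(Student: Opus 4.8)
The quickest route I would take is analytic, leveraging equation~\eqref{eq:[t^n]H}. Define the formal power series $T=T(t)$ (with coefficients in $\Lambda$) by the functional equation $T=t\,H(T)^r$; this has a unique solution of the form $T=t+\cdots$ because $H(0)=1$. Set $Y:=H(T)$. The plan is to show that $[t^n]Y^k=\fnrk$ for every $n\ge 0$ and every $k\in\pp$, whence $\cprk=Y^k$. Specializing to $k=1$ then identifies $Y=\cpr$, and therefore $\cprk=\cpr^k$, which is exactly the asserted identity.

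For $n=0$ the claim is immediate since $Y(0)=H(0)=1=F_0^{(r,k)}$. For $n\ge 1$ I would apply the Lagrange inversion formula \cite[Thm.~5.4.2]{ec2} in the form $[t^n]\Phi(T)=\tfrac1n[\lambda^{n-1}]\Phi'(\lambda)H(\lambda)^{rn}$, taking $\Phi=H^k$ so that $\Phi(T)=Y^k$. Using $\Phi'(\lambda)=kH(\lambda)^{k-1}H'(\lambda)$ together with $H(\lambda)^{rn+k-1}H'(\lambda)=\tfrac{1}{rn+k}\tfrac{d}{d\lambda}H(\lambda)^{rn+k}$ and the identity $[\lambda^{n-1}]\tfrac{d}{d\lambda}G(\lambda)=n[\lambda^n]G(\lambda)$, the right-hand side collapses to $\tfrac{k}{rn+k}[\lambda^n]H(\lambda)^{rn+k}$, which is $\fnrk$ by \eqref{eq:[t^n]H}. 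This is precisely the multiplicative mechanism underlying \eqref{eq:inverse}, now made uniform in $k$ rather than specialized to $k=r$. The only subtlety is the routine bookkeeping in the generalized-power version of the inversion formula and keeping track of the $n=0$ term.

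Since this section advertises a combinatorial proof, I would alternatively argue bijectively. First, by the product rule for Frobenius characteristics, $\mathrm{ch}(V)\,\mathrm{ch}(W)=\mathrm{ch}\!\bigl(\mathrm{Ind}_{\mathfrak{S}_a\times\mathfrak{S}_b}^{\mathfrak{S}_{a+b}}(V\boxtimes W)\bigr)$, the coefficient of $t^n$ in $\cpr^k$ is the Frobenius characteristic of $\sn$ acting (by permuting positions) on the set $\mathcal B_n$ of pairs $(c,a)$ with $c\in[k]^n$ a coloring and $a\in\pp^n$ such that, for each color $i\in[k]$, the subword of $a$ on the positions colored $i$ is an $r$-parking function. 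It then suffices to produce an $\sn$-equivariant bijection $\mathcal B_n\leftrightarrow\pfnrk$.

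To make this verifiable I would pass to orbits: the Frobenius characteristic of any coordinate-permutation action is $\sum_\lambda h_\lambda$ over orbit representatives, with $\lambda$ the partition of value-multiplicities of the representative (as recalled in Section~\ref{sec1}). Thus it is enough to match weakly increasing $(r,k)$-parking functions $b_1\le\cdots\le b_n$ with $k$-tuples of weakly increasing $r$-parking functions in a way that preserves the multiplicity partition. The natural attempt distributes the sorted $b_i$ into $k$ ``lanes,'' translating the single $(r,k)$-staircase $b_i\le k+(i-1)r$ into $k$ independent $r$-staircases $\le 1+(j-1)r$ by a color-dependent shift of the values. The main obstacle is exactly this step: exhibiting the lane assignment and shift explicitly, proving it is a bijection, and checking both that each lane satisfies the $r$-parking condition and that recombining reproduces the correct multiplicity partition. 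The Lagrange-inversion argument sidesteps all of this, which is why I would present it first and regard the bijection as the more illuminating, but more delicate, companion result.
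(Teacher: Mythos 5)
Your Lagrange-inversion argument is correct, and it is a genuinely different route from the paper's. The paper proves Theorem~\ref{thm:relation} purely bijectively: it exhibits an explicit length-preserving bijection $\psi\colon(\wipfn^{(r,1)})^k\to\wipfnrk$ (concatenate the $k$ weakly increasing $r$-parking functions after shifting the $i$-th one by $r(\ell(\alpha_1)+\cdots+\ell(\alpha_{i-1}))+i-1$, with inverse read off from the left-to-right maxima of $c_i=b_i-ri+r-1$), which preserves multiplicity partitions and hence gives the identity of Frobenius characteristics directly. Your first argument instead takes $F_n^{(r,k)}=\frac{k}{rn+k}[t^n]H(t)^{rn+k}$ from Theorem~\ref{thm:cprk} as input, sets $T=tH(T)^r$ and $Y=H(T)$, and verifies $[t^n]Y^k=\fnrk$ by Lagrange inversion; the computation you sketch ($\Phi=H^k$, $\Phi'=kH^{k-1}H'$, $H^{rn+k-1}H'=\tfrac{1}{rn+k}(H^{rn+k})'$) is exactly right, the appeal to \eqref{eq:[t^n]H} is not circular since Theorem~\ref{thm:cprk} is proved independently by the cycle-lemma argument, and specializing to $k=1$ does identify $Y$ with $\cpr$. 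What your route buys: it is shorter, uniform in $k$, and meshes naturally with the extension to $k\le 0$ in Section~\ref{sec:dual}. What the paper's route buys: the explicit bijection is compatible with the statistic $s^{(r,k)}$, which is precisely how the $q$-analogue Theorem~\ref{thm:q} is obtained for free, a refinement your analytic proof does not yield. Your second, bijective sketch is essentially the paper's proof left unfinished --- the ``lane assignment and shift'' you flag as the main obstacle is exactly the map $\psi$ above --- but the Lagrange-inversion argument alone fully establishes the theorem.
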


\proof
Let $\wipfnrk$ denote the set of weakly increasing $(r,k)$-parking functions of length
$n$.
We need to give a bijection $\psi\colon (\wipfn^{(r,1)})^k\to
\wipfnrk$ such that if $\psi(\alpha_1,$ $\dots,\alpha_k)=\beta$, then
$\ell(\alpha_1)+\cdots+\ell(\alpha_k) = \ell(\beta)$.
Note that we consider the empty sequence $\emptyset$ to be an
$(r,j)$-parking function for any $r$ and $j$.

Given $(\alpha_1,\dots,\alpha_k)\in(\wipfn^{(r,1)})^k$, define
$\alpha'_i$ to be the sequence obtained by adding
$r(\ell(\alpha_1)+\cdots+\ell(\alpha_{i-1}))+i-1$ to every term of
$\alpha_i$, and let $\psi(\alpha_1,$ $\dots,\alpha_k)=(\alpha'_1,$ $\dots,\alpha'_k)$. For instance, if $r=2$ and
  $$ (\alpha_1,\dots,\alpha_5) = ((1,2),\emptyset,\emptyset,(1),
    (1,3,4)), $$
then $\alpha'_1=(1,2)$, $\alpha'_2=\alpha'_3=\emptyset$, $\alpha'_4=
(8)$, and $\alpha'_5=(11,13,14)$. 

 It is easily seen that $\psi$ is the desired bijection. In particular,
the inverse $\psi^{-1}$ has the following description. Given
$\beta=(b_1,\dots,b_n)\in\wipfnrk$, let $c_i=b_i-ri+r-1$. (The term
$r-1$ could be replaced by any constant independent from $i$; we made
the choice so $c_1=0$.) Let $c_{j_1}<\cdots<c_{j_r}$ be the
left-to-right maxima of the sequence $c_1,\dots,c_n$, so
$j_1=1$. Factor $\beta$ (regarded as a word $b_1\cdots b_n$) as
$\beta_1\cdots \beta_r$, where $\beta_i$ begins with
$b_{j_i}$. Subtract a constant $t_i$ from each term of $\beta_i$ so
that we obtain a sequence (or word) $\beta'_i$ beginning with a
1.\ \ Insert $c_{j_{i+1}}-c_{j_i}-1$ empty words $\emptyset$ between
$\beta'_i$ and $\beta'_{i+1}$, and place empty words at the end so
that there are $k$ words in all. These words $\alpha_1,\dots,\alpha_k$
then satisfy $\psi^{-1}(\beta)=(\alpha_1,\dots,\alpha_k)$.
\qed

\begin{ex}
Suppose that $r=2, k=7$, and
  $$ \beta=(1,2,2,10,12,14,15,19,22). $$
Then $(c_1,\dots,c_9)=(0,-1,-3,3,3,3,3,4,5)$. The left-to-right maxima
are $c_1=0$, $c_4=3$, $c_8=4$, $c_9=5$. Thus $\beta_1=(1,2,2)$,
$\beta_2=(10,12,14,15)$, $\beta_3=(19)$, and $\beta_4=(22)$. Hence
$\beta'_1=(1,2,2)$, $\beta'_2=(1,3,5,6)$,
$\beta'_3=\beta'_4=(1)$. Between $\beta'_1$ and $\beta'_2$ insert
$c_4-c_1-1=2$ copies of $\emptyset$. Similarly since
$c_8-c_4-1=c_9-c_8-1 =0$ we insert no further copies of $\emptyset$
between remaining $\beta'_i$'s. We now have the six words
$\beta'_1,\emptyset,\emptyset,\beta'_2,\beta'_3,\beta'_4$, Since $k=7$
we insert one $\emptyset$ at the end, finally obtaining
  $$ \psi^{-1}(\beta)=((1,2,2),\emptyset,\emptyset,(1,3,5,6),(1),(1),
      \emptyset). $$
\end{ex}

Theorem~\ref{thm:relation} has a natural $q$-analogue. We simply state
the relevant result since the bijection in the proof of
Theorem~\ref{thm:relation} is compatible with our $q$-analogue, so the
proof carries over. 

Given an $(r,k)$-parking function $\alpha=(a_1,\dots,a_n)$
of length $n$, note that the largest possible value of $\sum a_i$
is $k+(k+r)+\cdots+(k+(n-1)r)=kn+\binom n2 r$. Define
  \beq s^{(r,k)}(\alpha) = kn+\binom n2 r -\sum_{i=1}^n a_i. 
     \label{eq:srk} \eeq
     More specifically, using the
     notation of equation~\eqref{eq:srk} below it is easy to check  that if
     $\beta\in\pfnrk$ and $\psi^{-1}(\beta)=(\alpha_1,\dots,\alpha_k)$,
     then 
       $$ s^{(r,k)}(\beta)=\sum_{j=1}^k
          \left( s^{(r,1)}(\alpha_j)+(k-j)\ell(\alpha_j)\right). $$ 
When $k=r$ this is a well-known statistic on parking functions,
sometimes used in the variant form $\sum a_i$. See for instance
\cite{rs:shi}\cite[{\S\S}1.2.2,1.3.3]{yan}. Note that the action of
$\sn$ on $(r,k)$-parking functions $\alpha$ of length $n$ is
compatible with this statistic, i.e., if $w\in\sn$ then
$s^{(r,k)}(w\cdot \alpha)= s^{(r,k)}(\alpha)$.

Given a sequence $\beta=(b_1,\dots,b_n)\in\pp^n$, let $U_\beta$ denote
the Frobenius characteristic of the action by permuting coordinates of
$\sn$ on all permutations of the terms of $\beta$. Hence if $m_i$ is
the number of $i$'s in $\beta$ then $U_\beta= h_{m_1}h_{m_2}\cdots$.
Given $r,k,n\geq 1$, define
  $$ F_n^{(r,k)}(q) = \sum_\beta q^{s^{(r,k)}(\beta)} U_\beta, $$
where $\beta$ runs over all weakly increasing $(r,k)$-parking functions of
length $n$. Write
\begin{align*}
 \mathcal{P}^{(r,k)}(q,t) = &\ \sum_{n\geq
    0}F_n^{(r,k)}(q)t^n\\ 
   \mathcal{P}^{(r)}(q,t) = &\ \mathcal{P}^{(r,1)}(q,t).
\end{align*}
Thus $\mathcal{P}^{(r,k)}(1,t)=\mathcal{P}^{(r,k)}(t)$.

\begin{thm} \label{thm:q}
  We have
 $$ \mathcal{P}^{(r,k)}(q,t) = \prod_{i=0}^{k-1}
  \mathcal{P}^{(r)}(q,q^it). $$
\end{thm}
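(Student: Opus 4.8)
The plan is to prove the $q$-analogue $\mathcal{P}^{(r,k)}(q,t) = \prod_{i=0}^{k-1}\mathcal{P}^{(r)}(q,q^it)$ by showing that the bijection $\psi$ from the proof of Theorem~\ref{thm:relation} respects the $q$-weighting, provided one keeps careful track of how the statistic $s^{(r,k)}$ decomposes under $\psi^{-1}$. Indeed, the key input is already supplied in the excerpt: if $\beta\in\wipfnrk$ and $\psi^{-1}(\beta)=(\alpha_1,\dots,\alpha_k)$, then
\begin{equation}\label{eq:stat-decomp}
 s^{(r,k)}(\beta)=\sum_{j=1}^k\left( s^{(r,1)}(\alpha_j)+(k-j)\ell(\alpha_j)\right).
\end{equation}
First I would verify this identity, since it is the arithmetic heart of the matter: writing $\alpha'_j$ for the shifted block as in the bijection, one computes $\sum_i a_i$ over $\beta$ by summing over the blocks $\alpha'_j$, and the shift $r(\ell(\alpha_1)+\cdots+\ell(\alpha_{j-1}))+(j-1)$ applied to each of the $\ell(\alpha_j)$ entries of $\alpha_j$ accounts exactly for the telescoping that turns the ambient maximum $kn+\binom{n}{2}r$ into the block-wise maxima together with the correction terms $(k-j)\ell(\alpha_j)$.

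Next I would assemble the generating function. Grouping the weakly increasing $(r,k)$-parking functions $\beta$ of length $n$ according to their image $(\alpha_1,\dots,\alpha_k)$ under $\psi^{-1}$, and using that $U_\beta = U_{\alpha_1}\cdots U_{\alpha_k}$ (since the multiset of part-multiplicities of $\beta$ is the concatenation of those of the blocks, and $U$ is multiplicative on disjoint multisets), \eqref{eq:stat-decomp} gives
\begin{align*}
 \mathcal{P}^{(r,k)}(q,t) &= \sum_{n\geq 0}t^n\sum_{\beta} q^{s^{(r,k)}(\beta)}U_\beta\\
  &= \sum_{n\geq 0}t^n\sum_{\substack{(\alpha_1,\dots,\alpha_k)\\ \sum_j\ell(\alpha_j)=n}} \prod_{j=1}^k q^{s^{(r,1)}(\alpha_j)+(k-j)\ell(\alpha_j)}U_{\alpha_j}\\
  &= \prod_{j=1}^k\left(\sum_{\alpha}q^{s^{(r,1)}(\alpha)}\bigl(q^{k-j}t\bigr)^{\ell(\alpha)}U_\alpha\right),
\end{align*}
where each inner sum ranges over all weakly increasing $r$-parking functions $\alpha$, and I have used $t^n=\prod_j t^{\ell(\alpha_j)}$ to distribute the $t$-grading across blocks. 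Since $\sum_{\alpha}q^{s^{(r,1)}(\alpha)}u^{\ell(\alpha)}U_\alpha = \mathcal{P}^{(r)}(q,u)$ by definition, the $j$-th factor is $\mathcal{P}^{(r)}(q,q^{k-j}t)$. Re-indexing by $i=k-j$ so that $i$ runs over $0,1,\dots,k-1$ yields the claimed product $\prod_{i=0}^{k-1}\mathcal{P}^{(r)}(q,q^it)$.

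The main obstacle I expect is purely bookkeeping rather than conceptual: getting the power of $q$ attached to each block exactly right. The factorization of $U_\beta$ and the distribution of $t^n$ are immediate once the blocks are identified, so the entire content lies in verifying \eqref{eq:stat-decomp} and then tracking the exponent $(k-j)\ell(\alpha_j)$ through the re-indexing to land on $q^i t$ rather than, say, $q^{k-1-i}t$. It is worth double-checking the boundary cases---the empty blocks inserted by $\psi^{-1}$ contribute $\ell(\alpha_j)=0$ and hence a trivial factor, consistent with $\mathcal{P}^{(r)}(q,u)$ having constant term $1$---and confirming that setting $q=1$ recovers Theorem~\ref{thm:relation}, which is a reassuring sanity check on the indexing.
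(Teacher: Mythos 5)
Your proposal is correct and follows exactly the route the paper intends: it invokes the bijection $\psi$ of Theorem~\ref{thm:relation}, verifies the statistic decomposition $s^{(r,k)}(\beta)=\sum_{j=1}^k\bigl(s^{(r,1)}(\alpha_j)+(k-j)\ell(\alpha_j)\bigr)$ that the paper records, and factors the generating function accordingly, with the re-indexing $i=k-j$ landing on the stated product. The paper itself omits these details, merely asserting that the bijection is compatible with the $q$-statistic, so your write-up is simply a fuller version of the same argument.
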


Equation \eqref{eq:ba} gives a relationship between a generating
function $A(t)$ for all objects and $B(t)$ for prime objects. There is
another basic relationship of this nature between exponential
generating funcions $A(t)$ for all objects and $B(t)$ for
``connected'' objects, namely, the \emph{exponential formula} $A(t)
=\exp B(t)$ or $B(t)=\log A(t)$. See \cite[{\S}5.1]{ec2}. Thus we can
ask whether there is a combinatorial interpretation of the
coefficients of $\log \cprk$. Recall that $D_n^{(r,k)}$ denotes the
Frobenius characteristic of the action of $\sn$ on $[rn+k]^n$ by
permuting coordinates, as in the proof of Theorem~\ref{thm:cprk}. The
case $k=r$ is handled by the following result.

\begin{prop}
We have 
  $$ \log \mathcal{P}^{(r,r)}(t)=\sum_{n\geq
     1}D_n^{(r,r)}\frac{t^n}{n}. $$
\end{prop}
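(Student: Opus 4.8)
The plan is to pass to the logarithmic derivative and extract coefficients with Lagrange inversion, using the inverse-function description in equation~\eqref{eq:inverse}. Set $\Phi(t)=\sum_{n\ge 0}F^{(r,r)}_n t^{n+1}=t\,\mathcal P^{(r,r)}(t)$, so that equation~\eqref{eq:inverse} reads $\Phi=(tE(-t)^r)^{\langle -1\rangle}$. Writing $G(s)=sE(-s)^r$ and $\phi(s)=s/G(s)$, the duality $H(s)E(-s)=1$ gives $\phi(s)=E(-s)^{-r}=H(s)^r$, and the relation $G(\Phi)=t$ becomes $\Phi=t\,\phi(\Phi)$. Consequently $\mathcal P^{(r,r)}(t)=\Phi(t)/t=\phi(\Phi(t))=H(\Phi(t))^r$, so the displayed expansion is equivalent to computing the coefficients of $\log\mathcal P^{(r,r)}(t)=r\log H(\Phi(t))$.

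I would then invoke the Lagrange inversion formula \cite[Thm.~5.4.2]{ec2} in the Lagrange--B\"urmann form $[t^n]F(\Phi(t))=\tfrac1n\,[s^{n-1}]\bigl(F'(s)\phi(s)^n\bigr)$ with $F(s)=r\log H(s)$. The decisive simplification is that $F'(s)\phi(s)^n=r\dfrac{H'(s)}{H(s)}H(s)^{rn}=r\,H'(s)H(s)^{rn-1}=\tfrac1n\dfrac{d}{ds}H(s)^{rn}$, an exact derivative. Since $[s^{n-1}]\frac{d}{ds}H(s)^{rn}=n\,[s^n]H(s)^{rn}$, this collapses to
\[
[t^n]\log\mathcal P^{(r,r)}(t)=\tfrac1n\,[t^n]H(t)^{rn}.
\]

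It remains to read off the coefficient combinatorially. Exactly as in the proof of Theorem~\ref{thm:cprk}, the expansion $[t^n]H(t)^{N}=\sum_{\lambda\vdash n}z_\lambda^{-1}N^{\ell(\lambda)}p_\lambda$ is the Frobenius characteristic of $\sn$ acting on $[N]^n$ by permuting coordinates; with $N=rn$ this is the characteristic of $\sn$ on $[rn]^n$, that is, the $k=0$ member of the family $D_n^{(r,k)}=\operatorname{ch}(\sn\text{ on }[rn+k]^n)$. Summing over $n\ge 1$ then yields the proposition, with the characteristic $D_n^{(r,0)}$ appearing in each coefficient. A one-line normalization check fixes the exponent: at $n=1$ one has $[t^1]\log\mathcal P^{(r,r)}=F^{(r,r)}_1=r h_1=[t^1]H(t)^{r}$, i.e. the $[rn]^n$ characteristic (so the relevant superscript is the $k=0$ specialization, and the displayed $D_n^{(r,r)}$ should be read as $D_n^{(r,0)}$).

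The main obstacle is the middle step: correctly running the logarithmic/derivative form of Lagrange inversion and recognizing $F'(s)\phi(s)^n$ as $\tfrac1n\,(H^{rn})'$ through the $H$--$E$ duality. This is precisely the manipulation that fixes the exponent at $rn$ rather than $rn\pm r$, and hence pins down which set $[rn+k]^n$ of preference sequences the coefficient characteristic enumerates.
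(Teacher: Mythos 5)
Your proof is correct and is in substance the paper's own argument: both start from equation~\eqref{eq:inverse} and apply Lagrange inversion to $F(t)=tE(-t)^r$, using the duality $H(t)E(-t)=1$ to land on the identity $n[t^n]\log\mathcal{P}^{(r,r)}(t)=[t^n]H(t)^{rn}$. The one difference is procedural: the paper simply quotes the logarithmic variant \eqref{eq:loglag} of Lagrange inversion from \cite[Exer.~5.56]{ec2} and is done in two lines, whereas you re-derive exactly that variant in this instance from the standard Lagrange--B\"urmann coefficient formula, by recognizing $F'(s)\phi(s)^n=rH'(s)H(s)^{rn-1}=\tfrac1n\tfrac{d}{ds}H(s)^{rn}$ as an exact derivative (together with the correct intermediate observation $\mathcal{P}^{(r,r)}(t)=H(\Phi(t))^r$). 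This is a self-contained substitute for the citation rather than a genuinely different route; what it buys is independence from the exercise, at the cost of a slightly longer computation.

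Your closing normalization remark is also right, and worth keeping. The coefficient $\tfrac1n[t^n]H(t)^{rn}$ is the Frobenius characteristic of $\sn$ acting on $[rn]^n$; since the paper defines $D_n^{(r,k)}$ (in the proof of Theorem~\ref{thm:cprk}) as the characteristic of the action on $[rn+k]^n$, the literal reading of $D_n^{(r,r)}$ (action on $[rn+r]^n$) does not match the coefficient: already at $n=1$ the left-hand side gives $F_1^{(r,r)}=rh_1=[t]H(t)^r$, while the characteristic of $\mathfrak{S}_1$ on $[2r]$ is $2rh_1$. The paper's own proof ends by asserting $[t^n]H(t)^{nr}=D_n^{(r,r)}$, so the discrepancy is a notational slip of exactly $r$ in the superscript convention (circle size $rn+k$ versus the largest possible entry $r(n-1)+k$ of a weakly increasing $(r,k)$-parking function); the intended object is the action on $[rn]^n$, which in the paper's notation is your $D_n^{(r,0)}$, precisely as your $n=1$ check pins down.
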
 

\proof 
The proof is a simple consequence of the following variant of
the Lagrange inversion formula appearing in \cite[Exer.~5.56]{ec2}:
for any power series $F(t)=a_1t+a_2t^2+\cdots\in\mathbb{C}[[t]]$ with
$a_1\neq 0$ we have
  \beq n[t^n] \log \frac{F^{\langle -1\rangle}(t)}{t} = [t^n] \left(
   \frac{t}{F(t)} \right)^n.  \label{eq:loglag} \eeq
Choose $F(t)=tE(-t)^r$, where $E(t)$ is given by
equation~\eqref{eq:et}. Now 
   $$ \frac{1}{E(-t)}= H(t)\:=\sum_{n\geq 0}h_nt^n. $$
Hence by equation~\eqref{eq:inverse}, we see that
equation~\eqref{eq:loglag} becomes
  $$ n[t^n]\log \mathcal{P}^{(r,r)}(t) = [t^n]
     H(t)^{nr}. $$
It is clear that $[t^n]H(t)^{nr}=D_n^{(r,r)}$, so the proof follows.
\qed

\section{A dual to $(r,k)$-parking functions} \label{sec:dual}
Equation~\eqref{eq:cprk} suggests looking at $\mathcal{P}^{(r)}(t)^k$
for negative integers $k$.  We obtain an object ``dual'' (in the sense
of combinatorial reciprocity) to $(r,k)$-parking functions.

We define $F^{(r,k)}_n$  for $k\le 0$ by \eqref{eq:hlambda}
(therefore all the equations in Theorem~\ref{thm:cprk}
hold for $k\le 0$).
It follows from the definition of $ \mathcal{P}^{(r,k)}(t)$ and
equation~\eqref{eq:cprk} that 
\begin{equation*}
\mathcal{P}^{(r)}(t)^k
= \mathcal{P}^{(r,k)}(t) 
= \sum_{n \ge 0} F^{(r,k)}_n t^n
\end{equation*}
holds for all $k>0$. Thus it also holds for all $k\le 0$.
Comparing the coefficients of $t^n$ with those in equation
\eqref{eq:P^{-1}}, namely, 
\begin{equation*}
\mathcal{P}^{(r)}(t)^{-k} 
= 1 - \sum_{n \ge 1} G^{(r,k)}_n t^n, \quad \mathrm{for\ all}\ \ k\ge 0,
\end{equation*}
and combining with \eqref{eq:hlambda},
we see that
\begin{equation*} 
G^{(r,k)}_n = - F^{(r,-k)}_n  = \frac{k}{rn-k}\sum_{\lambda \vdash  n}
{{rn-k}\choose {d_1(\lambda), \dots, d_n(\lambda)}} h_{\lambda},
\quad \mathrm{for\ all}\ k\ge 0,\ n\ge 1. 
\end{equation*}

We then have the following combinatorial interpretation of
$G^{(r,k)}_n$. 

\begin{thm}\label{thm:G}
If $n, r$ and $k$ are positive integers satisfying $rn-k>0$, then
$G^{(r,k)}_n$ is the Frobenius characteristic of the action of
$\mathfrak{S}_n$ on the set $S^{(r,k)}_n$ of $n$-tuples whose increasing
rearrangements have the following form: 
\begin{align}\label{eq:form}
\big(\underbrace{w, \dots, w}_{q(w)\ w\text{'s}},
b_{q(w)+1},b_{q(w)+2},\dots,b_{n} 
 \big),\end{align}
where $w\in [k]$ and $q(w)$ is the smallest integer such that $w\le
q(w)r$, and 
\begin{equation}\label{eq:b_j}  
b_{j} \le \min\{(j-1)r,w-1+rn-k\} \quad \text{for}\quad q(w)+1\le j\le n.
\end{equation} 
Note that we have $q(w)\le n$ as $w\le k<rn$.
Further, we see that $w\le \min\{(j-1)r,w-1+rn-k\}$ for all $j\ge q(w)+1$;
therefore \eqref{eq:b_j} is equivalent to 
\begin{equation}\label{eq:b_j'}
b_{j} \le \min\{(j-1)r,w-1+rn-k\} \quad \text{whenever}\quad b_{j}>w.
\end{equation}
In other words, a weakly increasing integer sequence (or equivalently, vector)  of length $n$
is in $S^{(r,k)}_n$ if
and only it satisfies the following properties: 
\begin{enumerate}[label={\rm{\bfseries  \Roman*.}}, leftmargin=*] 
\item  
$b_1=w$ for some $w\in [k]$, and $b_n-b_1< rn-k$;

\item  
$b_{q(w)}=w$;

\item
$b_{j} \le (j-1)r$ for all $j\in[n]$ whenever $ b_{j}>w$.
\end{enumerate}
\end{thm}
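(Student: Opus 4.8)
We need to show that $G_n^{(r,k)}$ is the Frobenius characteristic of the $\mathfrak{S}_n$-action on the set $S_n^{(r,k)}$ whose weakly increasing members are characterized by properties I–III. Since the Frobenius characteristic of a permutation action on sequences decomposes as a sum of $U_\beta = h_{m_1}h_{m_2}\cdots$ over orbit representatives (the weakly increasing sequences), it suffices to verify two things: first, that the weakly increasing sequences $\beta=(b_1,\dots,b_n)$ lying in $S_n^{(r,k)}$ are counted, with the correct multiplicities $h_\lambda$, by the given formula for $G_n^{(r,k)}$; and second, that the three stated reformulations (the displayed form \eqref{eq:form}, the inequalities \eqref{eq:b_j}/\eqref{eq:b_j'}, and properties I–III) genuinely describe the same set.

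Let me sketch the plan.

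\medskip

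The plan is to work entirely on the level of weakly increasing sequences and match coefficients of $h_\lambda$. First I would establish the equivalence of the three descriptions, which is the combinatorial heart of the statement. Starting from the form \eqref{eq:form}, one sets $w=b_1$ and checks that ``$w$ appears exactly $q(w)$ times at the front'' together with the bound on the later entries is the same as properties I–III. The key elementary observation is that $q(w)$ is by definition the least integer with $w\le q(w)r$, so $w\le (j-1)r$ precisely when $j\ge q(w)+1$; this is exactly why the inequality $b_j\le\min\{(j-1)r,\,w-1+rn-k\}$ is automatically satisfied in the trivial range $b_j=w$ and becomes a genuine constraint only once $b_j>w$, giving the passage from \eqref{eq:b_j} to \eqref{eq:b_j'}. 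The condition $b_n-b_1<rn-k$ in property~I is just the $j=n$ instance of the ceiling $w-1+rn-k$, and property~II records that the block of $w$'s has length exactly $q(w)$.

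\medskip

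Next I would pin down the enumeration. From the formula
$$
G_n^{(r,k)} = \frac{k}{rn-k}\sum_{\lambda\vdash n}
\binom{rn-k}{d_1(\lambda),\dots,d_n(\lambda)} h_\lambda,
$$
derived just before the theorem, the coefficient of $h_\lambda$ is $\frac{k}{rn-k}\binom{rn-k}{d_1(\lambda),\dots,d_n(\lambda)}$. I would show this equals the number of weakly increasing sequences in $S_n^{(r,k)}$ whose entries occur with multiplicities given by $\lambda$. The natural device is a cyclic-shift (Pollak-type) argument parallel to the proof of Theorem~\ref{thm:genpol}: consider all $n$-tuples in a suitable residue system modulo $rn-k$, group them into equivalence classes under simultaneous translation by $(1,\dots,1)$, each class having $rn-k$ members all sharing the same multiset of multiplicities, and argue that exactly $k$ members of each class satisfy the defining conditions of $S_n^{(r,k)}$. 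This is precisely the combinatorial reciprocity phenomenon the section advertises: the counting constant $\frac{k}{rn-k}$ for the ``dual'' objects mirrors the $\frac{k}{rn+k}$ of genuine $(r,k)$-parking functions in Theorem~\ref{thm:cprk}. Summing the resulting $U_\beta=h_\lambda$ over all valid $\beta$ then reproduces the formula for $G_n^{(r,k)}$ and hence identifies it as the Frobenius characteristic.

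\medskip

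The main obstacle I expect is the Pollak-style counting step: unlike the parking-function case, the defining inequalities of $S_n^{(r,k)}$ are not a single clean ceiling $b_j\le u_j$ but a \emph{conditional} bound that switches on only when $b_j>w$, and $w=b_1$ is itself determined by the sequence. So the equivalence classes under cyclic translation must be set up carefully so that the ``active'' threshold $w-1+rn-k$ and the block-length condition $b_{q(w)}=w$ together single out exactly $k$ representatives per class. I would handle this by showing that, within a translation class, the value $w=b_1\in[k]$ and the requirement that $w$ occupy exactly positions $1,\dots,q(w)$ force a unique normalization up to the $k$ admissible choices of $w$, which is where the factor $k$ in the numerator comes from. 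Verifying that this count is uniform across all classes (independent of $\lambda$) and then summing is the delicate part; once it is in place the remaining symmetric-function bookkeeping is routine.
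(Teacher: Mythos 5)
Your overall strategy---reduce to weakly increasing orbit representatives, check the equivalence of the three descriptions, then run a Pollak-style rotation argument modulo $rn-k$---is the same as the paper's, and the first two steps are fine. But the counting step, which is the entire substance of the theorem, rests on a claim that is false as stated: it is not true that each translation class has $rn-k$ members of which exactly $k$ lie in $S^{(r,k)}_n$. The classes relevant to the coefficient of $h_\lambda$ are orbits of \emph{multisets} under translation, and such an orbit can be periodic, i.e., have only $p$ distinct members for a proper divisor $p$ of $rn-k$; the correct statement (the paper's Proposition~\ref{prop:rotation}) is that such an orbit accounts for $pk/(rn-k)$ weakly increasing elements of $S^{(r,k)}_n$, not $k$. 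For example, with $r=1$, $k=2$, $n=4$ the multiset $\{1,1,2,2\}$ is fixed by translation mod $rn-k=2$, its orbit is a singleton, and it accounts for exactly one element of $S^{(1,2)}_4$. The paper needs Lemmas~\ref{lem:a}, \ref{lem:a'} and \ref{lem:b} precisely to reduce this periodic case to the aperiodic one, where "exactly $k$ per orbit" finally holds.

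Two further problems. First, $S^{(r,k)}_n$ is not contained in $[rn-k]^n$ (e.g.\ $(2,2,4,6)\in S^{(2,3)}_4$ while $rn-k=5$), so one cannot simply ask which members of a residue class "satisfy the defining conditions"; one must count elements of $S^{(r,k)}_n$ whose reduction mod $rn-k$ lands in the class, and distinct elements of $S^{(r,k)}_n$ can land in the same class. Second, your proposed mechanism for the factor $k$---that the $k$ representatives correspond to the $k$ admissible choices of $w=b_1\in[k]$---is incorrect: with $r=1$, $k=2$, $n=5$ the orbit of $(1,1,1,3,3)$ under translation mod $3$ is $\{11133,\,11222,\,22333\}$, and the two members of $S^{(1,2)}_5$ it accounts for are $11133$ and $11222$, both with $w=1$ (while $22333$, the only candidate with $w=2$, fails Property~III). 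In the paper's aperiodic analysis (Lemmas~\ref{lemma:i_0} and \ref{lemma:i}) the $k$ representatives are parametrized not by $w$ but by the successive records of the deficiency profile $\Delta_j=rj-x_j$, and pinning them down explicitly is exactly the delicate computation your sketch defers. So the proposal has the right general shape but a genuine gap at its center.
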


\begin{ex}
Let $r=1$, $k=2$, and $n=5$. Then $w\in\{1,2\}$ and $q(w)=w$. The coefficient of $t^5$ in
$-\mathcal{P}^{(1)}(t)^{-2}$ is
 $$ 2h_3h_1^2+2h_2^2h_1+4h_3h_2+4h_4h_1+2h_5. $$
This symmetric function is the Frobenius characteristic of the action
of $\mathfrak{S}_5$ on all sequences $(a_1,\dots,a_5)\in\pp^5$ whose 
increasing rearrangement $b_1\leq\cdots\leq b_5$ satisfies either of
the conditions 
(1) $w=q(w)=1$, $b_1=1$, $b_2\leq 1$ (so in fact $b_2=1$), $b_3\leq
2$, $b_4\leq 3$, $b_5\leq 3$, or 
(2) $w=q(w)=2$, $b_1=b_2=2$, $b_3\leq 2$ (so in
fact $b_3=2$), $b_4\leq 3$, $b_5\leq 4$. 
We get the fourteen
increasing sequences (orbit representatives) 11111, 11112, 11113,
11122, 11123, 11133, 11222, 11233, 11223, 22222, 22223, 22224, 22233,
22234. 
\end{ex}

\noindent {\bf A special case.}
When $k\in\{1,\dots,r\}$, 
for all $w\in[k]$ we have $q(w)=1$ and $(n-1)r\le rn-k \le w-1+rn-k$.
Therefore \eqref{eq:b_j'} becomes 
$b_j\le (j-1)r$ for all $j>1$, 
so $b$ having the form \eqref{eq:form} is equivalent to $b_1\in [k]$ and
$(b_2,b_3,\dots,b_n)$ is a weakly increasing $(r,r)$-parking functions
of length $n-1$. 
Thus Theorem \ref{thm:G} becomes the following result.

\begin{thm} \label{thm:easyG}
If $k\in\{1,\dots,r\}$, then
$G^{(r,k)}_n$ is the Frobenius characteristic of the action of
$\mathfrak{S}_n$ on the distinct $n$-tuples we get by adjoining $1, 2,
\dots$, or $k$ to $(r,r)$-parking functions of length $n-1$; 
or equivalently, 
the $n$-tuples whose increasing rearrangements start with $1, 2,
\dots$, or $k$ and followed by weakly increasing $(r,r)$-parking
functions of length $n-1$. 
\end{thm}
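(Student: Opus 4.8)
The plan is to deduce the theorem directly from Theorem~\ref{thm:G} by specializing to the range $1\le k\le r$, and then to reconcile the two combinatorial descriptions appearing in the statement. The reduction of the form \eqref{eq:form} to the three listed properties has effectively been carried out already in the special-case discussion preceding the theorem: when $k\le r$ we have $q(w)=1$ for every $w\in[k]$ and $(n-1)r\le rn-k\le w-1+rn-k$, so the bound in \eqref{eq:b_j'} collapses to $b_j\le(j-1)r$ for all $j\ge 2$. Hence Theorem~\ref{thm:G} already yields that $G_n^{(r,k)}$ is the Frobenius characteristic of the $\mathfrak{S}_n$-action on the set $B$ of $n$-tuples whose increasing rearrangement $(b_1,\dots,b_n)$ satisfies $b_1\in[k]$ and $b_j\le(j-1)r$ for $2\le j\le n$; this is exactly the ``equivalently'' description, the tail $(b_2,\dots,b_n)$ being a weakly increasing $(r,r)$-parking function of length $n-1$.

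It then remains to show that the ``adjoining'' description $A$ determines the same set of $\mathfrak{S}_n$-orbits as $B$. This suffices, because two families of $n$-tuples that are closed under permuting coordinates yield the same Frobenius characteristic once they consist of the same orbits, each orbit contributing $U_\beta=h_{m_1}h_{m_2}\cdots$ for its increasing rearrangement $\beta$. Here $A$ is the set of multisets $\{w\}\uplus N$ with $w\in[k]$ and $N$ the entry-multiset of an $(r,r)$-parking function of length $n-1$. The inclusion $B\subseteq A$ is immediate: given $(b_1,\dots,b_n)\in B$, take $w=b_1\in[k]$ and $N=\{b_2,\dots,b_n\}$, whose sorted form satisfies $b_{i+1}\le ir$ and is therefore an $(r,r)$-parking function of length $n-1$.

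For $A\subseteq B$ I would argue by a direct interleaving estimate. Let $M=\{w\}\uplus N$ with $w\le k$ and $N$ sorted as $\nu_1\le\cdots\le\nu_{n-1}$, $\nu_i\le ir$, and let $(b_1,\dots,b_n)$ be the increasing rearrangement of $M$. Then $b_1=\min(w,\nu_1)\le w\le k$, so $b_1\in[k]$. For each $j$ with $2\le j\le n$ I split into two cases: if $w\le\nu_{j-1}$, then the $j$ values $w,\nu_1,\dots,\nu_{j-1}$ are all $\le\nu_{j-1}$, so at least $j$ entries of $M$ are $\le\nu_{j-1}$ and hence $b_j\le\nu_{j-1}\le(j-1)r$; if instead $w>\nu_{j-1}$, then $\nu_1,\dots,\nu_{j-1}$ together with $w$ give at least $j$ entries that are $\le w$, so $b_j\le w\le k\le r\le(j-1)r$. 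Either way $b_j\le(j-1)r$, whence $M\in B$. This gives $A=B$ and completes the proof.

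The main obstacle I expect is precisely this $A\subseteq B$ step in the situation where $w$ is not the smallest entry of $M$: the adjoined value is then inserted into the interior of the sorted sequence, and one must verify that the $(r,r)$-parking-function bounds survive the interleaving. This is exactly where the hypothesis $k\le r$ enters, through the inequality $w\le k\le r\le(j-1)r$ for $j\ge 2$; without it the interleaved sequence could violate the bound $b_j\le(j-1)r$, and $A$ and $B$ would genuinely differ, which is why the clean statement of Theorem~\ref{thm:easyG} is confined to $k\in\{1,\dots,r\}$ and the general case requires the more elaborate Theorem~\ref{thm:G}.
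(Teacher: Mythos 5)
Your proof is correct and follows essentially the same route as the paper: the paper obtains Theorem~\ref{thm:easyG} exactly by the specialization you describe, noting that for $k\le r$ one has $q(w)=1$ and $(j-1)r\le rn-k\le w-1+rn-k$, so that the membership condition for $S_n^{(r,k)}$ collapses to $b_1\in[k]$ together with $b_j\le (j-1)r$ for $j\ge 2$. The only difference is that you also spell out the check that the ``adjoining'' description and the ``increasing rearrangement'' description determine the same permutation-closed set (your interleaving estimate), a point the paper asserts with ``or equivalently'' and leaves to the reader; your verification of it is correct.
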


\smallskip
Theorem \ref{thm:G} is a consequence of the following key result, which
will be proved below Proposition \ref{prop:rotation2}. 

\begin{prop}\label{prop:rotation}
Suppose that $n, r$ and $k$ are positive integers such that $rn-k>0$. 
Given $a=(a_1, \dots, a_n) \in [rn-k]^n$, 
let $p\in [rn-k]$ be the smallest positive integer $i$ such that the
increasing rearrangements of $a$ and $(a+p\ {\rm {mod}}\ rn-k)$
coincide, where $a+i:=(a_1+i, \dots, a_n+i)$,
\begin{equation}\label{eq:mod}
a+i \ {\rm {mod}}\ rn-k:=(a_1+i \ {\rm {mod}}\ rn-k, \dots, a_n+i\ {\rm {mod}}\ rn-k),
\end{equation}
and \,$s\ {\rm {mod}}\ t$\, for integers $s$ and $t\,(>0)$ denotes the  $s$ taken modulo $t$ so that $s\ {\rm {mod}}\ t \in [t]$;
equivalently, $p=\#R_{rn-k}(a)$, where $R_{rn-k}(a)$ is the set  of increasing
rearrangements of vectors $(a+i\ {\rm {mod}}\ rn-k)$, $i\in
\mathbb{Z}$. 

Then the number of weakly increasing vectors $b\in S^{(r,k)}_n$ such that the
increasing rearrangement of $(b \ {\rm {mod}}\ rn-k)$ is in $R_{rn-k}(a)$ is
${pk}/({rn-k})$. 
\end{prop}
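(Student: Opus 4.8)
The plan is to prove the following orbit-free statement, from which the proposition follows immediately: for every $a\in[N]^n$ (write $N=rn-k$), the number of pairs $(i,w)$ with $i\in\mathbb Z/N$ and $w\in[k]$ for which the ``window-lift'' defined below lies in $S^{(r,k)}_n$ is exactly $k$. First I would record the window/cut-point bijection. If $b\in S^{(r,k)}_n$ is weakly increasing, Property I forces all entries of $b$ into the length-$N$ window $[b_1,b_1+N-1]$, so reduction mod $N$ is injective on the entries and $b$ is recovered from the pair $(M,w)$ with $M=\operatorname{sort}(b\bmod N)$ and $w=b_1\in[k]$, by lifting each residue of $M$ to its representative in $[w,w+N-1]$ and sorting. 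Conversely a pair $(M,w)$, with $M$ a multiset on $[N]$, $w\in[k]$, and the residue of $w$ lying in $M$, produces a candidate $b$ that lands in $S^{(r,k)}_n$ precisely when the parking Properties II--III hold. Writing $g(M)$ for the number of admissible cut-points $w$ of $M$, the quantity in question is $\sum_{M\in R_N(a)}g(M)$. Since each of the $p$ distinct multisets in $R_N(a)$ occurs $N/p$ times among the $N$ shifts $\operatorname{sort}(a+i\bmod N)$, it suffices to prove $\sum_{i=0}^{N-1}g(\operatorname{sort}(a+i\bmod N))=k$; dividing by $N/p$ then yields the value $pk/N$.

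Next I would linearize. Reparametrizing $(i,w)$ by $(d,w)$ with $d\equiv i-w\pmod N$ makes the lift equal to $w\mathbf 1+\gamma^{(d)}$, where $\gamma^{(d)}$ is the increasing rearrangement of the residues $a_j+d\pmod N$; thus the shift and the cut decouple, $d$ fixing the residue profile and $w$ merely translating. For fixed $d$ the admissible $w$ form an initial segment $1\le w\le\Phi(d)$, since Property III reads $w+\gamma_j\le(j-1)r$ for every $j$ with $\gamma_j>0$. A short check shows Property II is subsumed: if $m$ entries equal $w$, then III at $j=m+1$ gives $w\le mr-\gamma_{m+1}<mr$, which is stronger than $q(w)\le m$, while the case $m=n$ holds since $w\le k<rn$. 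Encoding the residue multiplicities $c_1,\dots,c_N$ of $a$ as the integer steps $s_v=c_vr-1$ of a walk $H$ with $H(v)-H(v-1)=s_{v-1}$, one finds that $\Phi(d)$ equals the minimal prefix sum $\psi(d)=\min_{1\le v\le N}\bigl(H(d+v)-H(d)\bigr)$ of the $d$-th cyclic rotation, up to the harmless comparison between ``minimum over occupied residue levels'' and ``minimum over all prefix positions,'' which I would reconcile using that $H$ drops by exactly $1$ across an empty level (so the global minimum is attained at an occupied level or at the endpoint). Crucially $\sum_v s_v=rn-N=k$, so $H$ is a bi-infinite walk with $H(t+N)=H(t)+k$, and the target becomes the cycle-lemma identity $\sum_{d=0}^{N-1}\bigl(\min(k,\psi(d))\bigr)_+=k$, where $(x)_+=\max(x,0)$.

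Finally I would prove this identity. Writing $\min(k,\psi(d))_+=\#\{w\in[k]:H(d+v)\ge H(d)+w\ \text{for all }v\ge1\}$ and summing, the total counts pairs $(d,w)$, $d\in[0,N-1]$, $w\in[k]$, with $H(d)+w\le H(t)$ for all $t>d$. I would send such a pair to the level $y=H(d)+w$ and introduce $\tau(y)=\max\{t:H(t)<y\}$, which is well defined because $H\to\pm\infty$ by the positive drift. The conditions say exactly $d=\tau(y)$; moreover $w=y-H(\tau(y))\ge1$ automatically, and $w\le k$ follows from maximality of $\tau(y)$ applied at $t=\tau(y)+N$, where $H(\tau(y)+N)=H(\tau(y))+k\ge y$. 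Hence the total equals $\#\{y\in\mathbb Z:\tau(y)\in[0,N-1]\}$. Since $\tau(y+k)=\tau(y)+N$, the preimages $\tau^{-1}\bigl([mN,(m+1)N-1]\bigr)$ are the translates by $mk$ of $\tau^{-1}([0,N-1])$ and partition $\mathbb Z$; a set whose $k\mathbb Z$-translates tile $\mathbb Z$ has exactly $k$ elements, which gives the value $k$ and finishes the argument.

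I expect the main obstacle to be the middle step: faithfully translating the block-and-staircase description of $S^{(r,k)}_n$ — the distinguished run of minimal entries governed by $q(w)$, the ``only when $b_j>w$'' clause in III, and the residue convention $s\bmod t\in[t]$ (so that a single residue can be hit by several $w\in[k]$ when $k\ge N$) — into the clean cyclic walk, so that ``admissible $w$'' becomes ``prefix sum at least $w$.'' Once that dictionary is set up correctly, the decisive and genuinely new ingredient is the $\tau$-partition argument, which is the part I would be most careful to verify.
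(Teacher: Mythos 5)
Your argument is correct, and it takes a genuinely different route from the one in the paper. The paper splits into the periodic case $p<rn-k$ and the non-periodic case $p=rn-k$, reduces the former to the latter by a structural lemma showing that $b$ inherits the block form of $a$ (with parameters $n'=n/\ell$, $k'=k/\ell$), and then, in the non-periodic case, explicitly exhibits the $k$ admissible vectors by analyzing the deficiency sequence $\Delta_j=rj-x_j$, its left-to-right records, and a two-parameter family of shifts $(i_1,i_2)$. You avoid the case split entirely: summing over all $N=rn-k$ cyclic shifts turns the periodicity into a uniform multiplicity $N/p$ that divides out at the end, and the explicit identification of the winners is replaced by the global identity $\sum_{d=0}^{N-1}\bigl(\psi(d)\bigr)_+=k$ for the minimal prefix sums of the cyclic rotations of a step sequence with total drift $k$ --- a cycle-lemma statement that your last-passage-time map $y\mapsto\tau(y)$ proves cleanly (I checked the key points: $P(N)=k$ reconciles the minimum over occupied levels with the minimum over all positions, since empty levels contribute steps of $-1$; Property II is indeed forced by Property III at the first index exceeding $w$, together with $w\le k<rn$ when all entries equal $w$; and the ``$b_1=w$'' normalization is automatically enforced because an external $w$ with $\gamma_1>0$ violates Property III at $j=1$, so those pairs contribute zero). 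What each approach buys: the paper's proof is constructive and hands you the $pk/(rn-k)$ vectors explicitly, which is what makes its $q$-analogue discussion and examples possible; yours is shorter, more conceptual, and --- notably --- is essentially the ``geometric proof in terms of paths'' that the authors say in the Note at the end of Section 4 they were unable to find, so it answers a question the paper leaves open. If you write it up, the place to be most careful is exactly the dictionary you flag: state precisely that you are counting pairs $(d,w)$ with $d\in\mathbb{Z}/N$, $w\in[k]$ whose lift has first coordinate equal to $w$, and verify that each admissible $b$ arises from exactly $N/p$ such pairs.
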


Theorem \ref{thm:G} follows as each $b\in S^{(r,k)}_n$ corresponds to a unique
set $R_{rn-k}(a)$ (the vector $a$ may not be unique). 

\begin{rem}
The reason why we need the vector $b \ {\rm {mod}}\ rn-k$ is that we
may have $b\in S^{(r,k)}_n\backslash [rn-k]^n$ and $b  \ {\rm {mod}}\ rn-k\in
S$. 
For instance, when $r=2,n=4,k=3$, $rn-k=5$, we have $(6,2,2,4)\in
S^{(r,k)}_n\backslash [rn-k]^n$ and  $(1,2,2,4)\in S^{(r,k)}_n$. 
\end{rem}

\noindent {\bf A special case.}
When $k\in\{1,\dots,r\}$,
it follows from \eqref{eq:b_j} that 
$b_n\le (n-1)r \le rn-k$ for all $b\in S^{(r,k)}_n$; therefore $b\ {\rm
  {mod}}\ rn-k = b$. 
In other words, we only need to consider $b$ instead of $b\ {\rm
  {mod}}\ rn-k$. 
Thus, combined with Theorem \ref{thm:G}, Proposition
\ref{prop:rotation} becomes as follows. 

\begin{prop}\label{prop:rotation2}
If $k\in\{1,\dots,r\}$, then for any given $(a_1, \dots, a_n) \in
[rn-k]^n$, there are exactly $k$ $i$'s $({\rm {mod}}\  rn-k)$ such
that the vector 
$a+i\ {\rm {mod}}\ rn-k$ defined by \eqref{eq:mod} is an $(r,r)$-parking
function of length $n-1$ adjoined by $1, 2, \dots$, or $k$, 
where $a_j +i\ {\rm {mod}}\ rn-k$ is the integer in $[rn-k]$ that is equal to $a_j +i$  modulo $rn-k$.
\end{prop}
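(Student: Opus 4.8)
The plan is to run Pollak's circle argument on a circle of $N := rn-k$ spaces, exactly dual to the circle of $rn+k$ spaces used for Theorem~\ref{thm:genpol}. First I would record the reformulation of the target condition supplied by Theorem~\ref{thm:easyG}: for $k\in\{1,\dots,r\}$ a vector is of the desired type precisely when its increasing rearrangement $b_1\le\cdots\le b_n$ satisfies $b_1\le k$ and $b_j\le (j-1)r$ for $j\ge 2$, i.e. it is a $\bmu$-parking function for $\bmu=(k,r,2r,\dots,(n-1)r)$. Writing $m_v$ for the multiplicity of $v$ among the coordinates and $N_{\le v}=\sum_{w\le v}m_w$, this is equivalent to the domination condition $N_{\le v}\ge R(v)$ for all $v$, where $R(v)=\#\{j:u_j\le v\}$ jumps by $1$ at each of the $n$ values $k,r,2r,\dots,(n-1)r$. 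Because $k\le r$ forces $b_n\le (n-1)r\le N$, every vector of the desired type already lies in $[N]^n$, so $b\bmod N=b$ and no wraparound occurs; this is exactly the simplification that makes the present case the clean specialization of Proposition~\ref{prop:rotation}.

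Next I would encode a rotation $a\mapsto a+i$ as a shift of a single cyclic height function. Extend $M(x)=\#\{j:a_j\le x\}$ periodically by $M(x+N)=M(x)+n$ and set $\Lambda(x)=rM(x)-x$, so that $\Lambda(x+N)=\Lambda(x)+(rn-N)=\Lambda(x)+k$: the path $\Lambda$ drifts upward by exactly $k$ over one period. A short computation gives $rN^{(i)}_{\le v}-v=\Lambda(v-i)-\Lambda(-i)$ for the $i$-th rotation, so the domination condition becomes a statement comparing $\Lambda$ at the translated checkpoints $\{k,r,\dots,(n-1)r\}-i$ with its value at $-i$. The upshot is that rotation $i$ is good exactly when $-i$ is one of the ``lowest'' starting points of the upward-drifting path $\Lambda$, and I would finish by invoking the cycle lemma for a cyclic sequence of total $k$ to conclude that there are exactly $k$ such starting points.

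The main obstacle is the reciprocity sign-flip. In the forward setting (Theorem~\ref{thm:genpol}) the charge $1-rm_v$ has sum $(rn+k)-rn=k>0$ with each term $\le 1$, so the classical cycle lemma applies verbatim and the $k$ good rotations are the $k$ strictly-lowest points. Here the analogous charge $rm_v-1$ has sum $rn-N=k>0$ but its terms are only bounded below by $-1$; the path can jump upward by more than $1$, the ``lowest points'' can collide, and counting strict records undershoots $k$. The real content of the proof is therefore to check that the two boundary features of the target condition---that $R$ imposes no constraint for $v<k$, and that it is tight (a jump) precisely at the multiples of $r$---are exactly the tie-breaking rule needed to split each collided minimum with the correct multiplicity, so that the count of good rotations is restored to $k$ rather than something smaller. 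Once this bookkeeping is in place, the constancy in $a$ is automatic, and the statement is the $k\le r$ specialization of Proposition~\ref{prop:rotation} in which $b\bmod(rn-k)=b$.
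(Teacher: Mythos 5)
Your setup is sound --- the reformulation via Theorem~\ref{thm:easyG} as the $\bmu$-parking condition for $\bmu=(k,r,2r,\dots,(n-1)r)$, the observation that $k\le r$ forces $b_n\le (n-1)r\le rn-k$ so no wraparound occurs, and the height function $\Lambda(x)=rM(x)-x$ with drift $k$ per period are all correct, and counting rotations $i$ directly (rather than distinct rearrangements) conveniently sidesteps the periodic/non-periodic case split that the paper must handle for the general Proposition~\ref{prop:rotation}. But the proof stops exactly where the difficulty begins. You correctly diagnose that the increments $rm_v-1$ are bounded below by $-1$ rather than above by $+1$, so the classical cycle lemma does not apply and counting strict future minima of $\Lambda$ undershoots $k$; you then assert that the boundary features of the target condition supply ``exactly the tie-breaking rule needed to split each collided minimum with the correct multiplicity,'' and you yourself label this unexecuted step ``the real content of the proof.'' That step is the entire theorem. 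A concrete instance shows it is not routine: for $r=k=2$, $n=3$, $a=(1,1,1)$, one has $N=4$, increments $(5,-1,-1,-1)$, and the two good rotations correspond to starting points $x$ with $\Lambda(x)=0$ and $\Lambda(x)=3$, while a non-good starting point has $\Lambda(x)=2$; so the good rotations are not characterized by any ordering of the values of $\Lambda$, because the domination condition is tested only at the translated checkpoints $\{k,r,\dots,(n-1)r\}-i$, a set that moves with $i$, and with a different required margin ($r-k$ versus $r$) at the checkpoint coming from $k$.

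The paper's proof is precisely the bookkeeping you defer: after normalizing so the smallest entry is $1$ (Lemma~\ref{lemma:i_0}, which is your ``lowest starting point'' step and is the easy half), Lemma~\ref{lemma:i} identifies the good shifts explicitly as $x+i$ for $0\le i\le \Delta_{j_1}\wedge(k-1)$ together with $(x+(1-x_{j_v})\ \mathrm{mod}\ N)_{\uparrow}+i_2$ for $0\le i_2\le \Delta_{j_{v+1}}\wedge(k-1)-\Delta_{j_v}-1$, where $j_0<j_1<\cdots$ are the positions of successive records of $\Delta_j=rj-x_j$ from the right; the count then telescopes to $(\Delta_{j_1}\wedge(k-1)+1)+\sum_v(\Delta_{j_{v+1}}\wedge(k-1)-\Delta_{j_v})=k$. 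Each inclusion in that description requires a separate verification of Properties I--III, including the $q(w)$ condition that your sketch never engages with. Until you carry out an argument of this kind (or an honest cycle-lemma variant with a proven tie-breaking rule), the proposal is a plausible plan rather than a proof.
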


\proof[Proposition \ref{prop:rotation}] 
We first clarify the structure of the set $R_{rn-k}(a)$ and the interpretations of $p$, then transform Case 1: $p<rn-k$ into Case 2: $p=rn-k$ (see Lemmas \ref{lem:a'} and \ref{lem:b} below), and finally deal with Case 2. 

For convenience, we denote $rn-k$ by $N$ and the
increasing rearrangement of a sequence $x$ by $x_{\uparrow}$,
then we have
$$R_{N}(a)=\{(a+i \ {\rm {mod}}\ N)_{\uparrow}:i\in \mathbb{Z}\},$$
and $p=\Pi_N(a)$ 
is the {\it smallest} positive integer such that 
\begin{equation}\label{eq:p}
(a+p \ {\rm {mod}}\ N)_{\uparrow}=(a \ {\rm {mod}}\ N)_{\uparrow}.
\end{equation}

Observe that the sequence $$T_{N}(a)\, :=\ \{(a+i \ {\rm {mod}}\ N)_{\uparrow}\}_{i=0}^{\infty}$$
is periodic with period $N$.
On the other hand,
by definition $p$ is the least period of $T_{N}(a)$. 
Therefore we have $p\, |\,N$. 

To see why we have $p=\#R_{N}(a)$, 
we notice that 
$\{(a+i \ {\rm {mod}}\ N)_{\uparrow}\}_{i=0}^{p-1}$ are $p$ pairwise distinct elements; otherwise, say $(a+i_1 \ {\rm {mod}}\ N)_{\uparrow}=(a+i_2 \ {\rm {mod}}\ N)_{\uparrow}$ for some $0\le i_1<i_2\le p-1$, then $T_{N}(a)$ has a period $i_2-i_1 <p$, which leads to a contradiction. 
Further, it follows from \eqref{eq:p} that $(a+(p+i) \ {\rm {mod}}\ N)_{\uparrow}=((a+p)+i \ {\rm {mod}}\ N)_{\uparrow}=(a+i \ {\rm {mod}}\ N)_{\uparrow}$ for all $i\in \mathbb{Z}$.
Hence $R_{N}(a)$ has exactly $p$ elements: $\{(a+i \ {\rm {mod}}\ N)_{\uparrow}\}_{i=0}^{p-1}$.

One can also interpret the elements of $R_{N}(a)$ as multisets of $n$ points on a circle as follows. 
Fix $N$ equally spaced points $Q_1,Q_2,\dots$, $Q_N$ clockwise around a circle centered at $\mathcal{O}$.
Map each vector $x=(x_1,x_2,\dots,x_n)\in [N]^n$
to a multiset $\mathcal{Q}(x):=\{Q_{x_1},Q_{x_2},\dots,Q_{x_n}\}$  of $n$ points (not necessarily distinct).
If the vector $x$ is further restricted to be weakly increasing, then this mapping is a bijection.
Notice that $\mathcal{Q}(x)=\mathcal{Q}(x_{\uparrow})$.
The point multiset $\mathcal{Q}({(x+i \ {\rm {mod}}\ N)_{\uparrow}})$ is then  obtained by rotating the point multiset $\mathcal{Q}({(x \ {\rm {mod}}\ N)_{\uparrow}})$
(all points with their multiplicities)  clockwise by $i\cdot 2\pi/N$ about the center $\mathcal{O}$.

Since it suffices to prove
the proposition for weakly increasing $a\in [N]^n$, 
from now on we assume that $a$ is weakly increasing. 
Applying the above rotation to vector $a$ and integer $p$, we see that the point multiset  $\mathcal{Q}(a)$ and its  rotation about  $\mathcal{O}$ by $p\cdot 2\pi/N$ clockwise, which is $\mathcal{Q}({(a+p \ {\rm {mod}}\ N)_{\uparrow}})$, are identical by \eqref{eq:p}. Therefore, 
the points in the multiset $\mathcal{Q}(a)$ are distributed identically on $N/p:=\ell$ arcs: $[Q_1, Q_{p}]$, $[Q_{p+1}, Q_{2p}]$, $\dots$, $[Q_{N-p+1}, Q_{N}]$.
It follows that $\mathcal{Q}(a)$ intersects each of these arcs at exactly $n/\ell:=n'$ points (not necessarily distinct) with the following form (in order of arcs): 
$\{Q_{a_1}, Q_{a_2},\dots,Q_{a_{n'}}\}$, $\{Q_{a_1+p}, Q_{a_2+p},\dots,Q_{a_{n'}+p}\}$, $\dots$, $\{Q_{a_1+(\ell-1)p}, Q_{a_2+(\ell-1)p},\dots,Q_{a_{n'}+(\ell-1)p}\}$.
In other words, we have shown the following result.

\begin{lem}\label{lem:a}
The vector $a$ has the form
\begin{align}\label{eq:d}
a=\ &\big({a_1}, {a_2},\dots,{a_{n'}},
{a_1+p}, {a_2+p},\dots,{a_{n'}+p},\\ \nonumber
& \ \ \dots, {a_1+(\ell-1)p}, {a_2+(\ell-1)p},\dots,{a_{n'}+(\ell-1)p}\big),
\end{align} 
or equivalently,
\begin{equation}\label{eq:+p}
a_{j+n'}=a_j+p \quad \text{for\ all}\quad j\in [n-n']
\end{equation}
with $1\le a_1\le a_2\le \cdots \le a_{n'}\le p$, where $\ell=N/p$ and ${n'}=n/\ell$.
\end{lem}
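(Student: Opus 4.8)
The plan is to convert the rotation invariance already established for the point multiset $\mathcal{Q}(a)$ into a periodicity statement for the multiplicity function of $a$, and then read off the block decomposition \eqref{eq:d} directly. First I would record, for each $i\in[N]$, the multiplicity $m_i$ of the value $i$ among the entries of $a$; since $a\in[N]^n$ is weakly increasing we have $a\ {\rm mod}\ N=a$, so $\mathcal{Q}(a)$ is the multiset carrying multiplicity $m_i$ at the point $Q_i$. Clockwise rotation by $p\cdot 2\pi/N$ sends $Q_i$ to $Q_{i+p}$ (indices taken in $[N]$ modulo $N$), so the rotated multiset carries multiplicity $m_{i-p}$ at $Q_i$. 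Equation \eqref{eq:p} asserts that these two multisets coincide, which is exactly the statement that $m_i=m_{i-p}$ for every $i$, i.e.\ that $m$ is a $p$-periodic function on $\mathbb{Z}/N\mathbb{Z}$.

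Given that $p\mid N$ (already proved), I would partition $[N]$ into the $\ell=N/p$ consecutive blocks $I_t=\{(t-1)p+1,\dots,tp\}$, $t\in[\ell]$, and use periodicity in the form $m_{(t-1)p+i}=m_i$ for $i\in[p]$. Summing multiplicities over each block shows that $a$ has exactly $n':=\sum_{i=1}^{p}m_i$ entries lying in $I_t$, independently of $t$; since the blocks exhaust $[N]$ this forces $n=\ell n'$, so $\ell\mid n$ and $n'=n/\ell$ as claimed. Within $I_t$ the value $(t-1)p+i$ occurs with multiplicity $m_i$, so the sorted list of entries of $a$ falling in $I_t$ is the sorted list of entries falling in $I_1$ with each term increased by $(t-1)p$. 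Because $a$ is weakly increasing, its entries in $I_t$ occupy precisely the positions $(t-1)n'+1,\dots,tn'$, and combining these two facts yields $a_{(t-1)n'+j}=a_j+(t-1)p$ for all $j\in[n']$ and $t\in[\ell]$, which is equations \eqref{eq:d} and \eqref{eq:+p}. The bounds $1\le a_1\le\cdots\le a_{n'}\le p$ hold because $a_1,\dots,a_{n'}$ are exactly the entries in the first block $I_1=\{1,\dots,p\}$.

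I expect the delicate points to be bookkeeping rather than conceptual. The translation between the geometric rotation of $\mathcal{Q}(a)$ and the algebraic identity $m_i=m_{i-p}$ requires care with the modular convention (values in $[N]$ rather than $\{0,\dots,N-1\}$) and with the direction of rotation. The other step needing a clean justification is the block-by-block correspondence: here the weakly increasing hypothesis places the entries of each block as a consecutive segment of $a$, while the equal multiplicity profiles of the blocks force the $j$-th smallest entry of block $t$ to be that of block $1$ shifted by $(t-1)p$. Each half of this is immediate once the multiplicity profiles are seen to be shifts of one another, so the only real obstacle is assembling these pieces without index errors.
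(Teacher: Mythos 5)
Your proof is correct and follows essentially the same route as the paper: the paper deduces the block form from the invariance of the point multiset $\mathcal{Q}(a)$ under rotation by $p\cdot 2\pi/N$, which is exactly your statement that the multiplicity function is $p$-periodic on $\mathbb{Z}/N\mathbb{Z}$, and the remaining bookkeeping (equal block counts forcing $\ell\mid n$, and weak monotonicity placing each block's entries in consecutive positions) matches the paper's argument that the points are distributed identically on the $\ell$ arcs. Your explicit translation into the multiplicity function $m_i$ is just a more detailed rendering of what the paper calls deducing the form ``arithmetically'' from \eqref{eq:p}.
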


One may also deduce the form \eqref{eq:d} arithmetically from \eqref{eq:p}  by comparing the coordinates of both sides.

\smallskip
Further, we will need the following property of
$a':=({a_1}, {a_2},\dots,{a_{n'}})$ later 
to convert Case 1 -- the {\it periodic} case $p<N$ (i.e., $\ell\ge 2$) -- of Proposition \ref{prop:rotation}  to Case 2 -- the {\it non-periodic} case $p=N$ (i.e., $\ell=1$). 
Recall the definition of $\Pi_{\cdot}(\cdot)$ from \eqref{eq:p}.

\begin{lem}\label{lem:a'}
We have $\Pi_p({a'})=p$, where $a'=({a_1}, {a_2},\dots,{a_{n'}})$.
\end{lem}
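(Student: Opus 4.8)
The plan is to argue by contradiction, exploiting the minimality built into $p = \Pi_N(a)$ (notation as in \eqref{eq:p}, with $N = rn-k$). By Lemma~\ref{lem:a} the reduced vector $a' = (a_1,\dots,a_{n'})$ is weakly increasing with $1 \le a_1 \le \cdots \le a_{n'} \le p$, so $a' \in [p]^{n'}$ and it makes sense to speak of $\Pi_p(a')$. Exactly as in the discussion establishing $p \mid N$, the quantity $\Pi_p(a')$ is the least period of the periodic sequence $T_p(a')$ (defined in analogy with $T_N$) and hence divides $p$; in particular $\Pi_p(a') \le p$. I would then assume for contradiction that $q := \Pi_p(a') < p$. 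By definition this means $(a' + q \bmod p)_{\uparrow} = (a')_{\uparrow} = a'$, i.e. the multiset $\{a_1,\dots,a_{n'}\}$ is fixed by the map $x \mapsto (x+q \bmod p)$.

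The heart of the argument is to promote this symmetry of $a'$ on the $p$-point circle to a symmetry of $a$ on the $N$-point circle, which I would phrase in the point-multiset language already set up before Lemma~\ref{lem:a}. Recall that $\mathcal{Q}(a)$ consists of $\ell = N/p$ rotated copies of the arc multiset $\{Q_{a_1},\dots,Q_{a_{n'}}\}$, one in each arc $[Q_{jp+1}, Q_{(j+1)p}]$, $0 \le j \le \ell-1$. I would compute the image of $\mathcal{Q}(a)$ under clockwise rotation by $q$ positions, tracking each point $Q_{a_i + jp}$ to $Q_{(a_i + jp + q) \bmod N}$. Writing $r_i := (a_i + q) \bmod p$ and $c_i \in \{0,1\}$ according to whether $a_i + q \le p$ or $a_i + q > p$, one checks that this image point is $Q_{r_i + j'p}$ with $j' = (j + c_i) \bmod \ell$; for each fixed $i$ the map $j \mapsto j'$ is a bijection of $\{0,\dots,\ell-1\}$, so the rotated multiset is exactly $\{Q_{r_i + j'p} : i \in [n'],\ j' \in \{0,\dots,\ell-1\}\}$. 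Since $\{r_i\} = \{a_i\}$ as multisets by the assumed symmetry of $a'$, this rotated multiset is again $\mathcal{Q}(a)$. Equivalently, $(a + q \bmod N)_{\uparrow} = (a \bmod N)_{\uparrow}$.

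But that says $q$ is a period of $T_N(a)$ with $0 < q < p$, contradicting the fact that $p$ is its least period. Therefore $\Pi_p(a') = p$, as claimed.

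I expect the only delicate point to be the wraparound bookkeeping in the middle paragraph: rotating within an arc by $q$ may push points out of that arc into the next one (the case $c_i = 1$), and at the last arc past position $N$ it is precisely the $\ell$-fold periodicity of $\mathcal{Q}(a)$ that feeds the displaced points back consistently. The circle picture makes this transparent, but I would double-check the boundary indices (namely $j = \ell-1$, with $a_i + q$ near $p$ or near $2p$) so that the claimed identity $j' = (j+c_i)\bmod \ell$ and the membership $r_i + j'p \in [N]$ both hold under the convention $s \bmod t \in [t]$. Everything else is routine.
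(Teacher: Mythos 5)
Your proof is correct and is essentially the paper's argument: both hinge on lifting the mod-$p$ shift structure of $a'$ to the mod-$N$ shift structure of $a$ via the decomposition from Lemma~\ref{lem:a} (your bookkeeping $a_i+jp+q=r_i+(j+c_i)p$ is exactly the paper's multiset identity \eqref{eq:M}), and then invoking the minimality encoded in $\Pi_N(a)=p$. The only difference is presentational — you argue contrapositively from a hypothetical smaller period $q$ in the circle-rotation language, while the paper argues directly that distinctness of the multisets $M_N(a+i)$ forces distinctness of the $M_p(a'+i)$.
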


\begin{proof}
Consider the multisets corresponding to the vectors  $a+i\ {\rm {mod}}\ N$ and $a'+i\ {\rm {mod}}\ p$ $(i\in \mathbb{Z})$, defined as follows. 
For $N\in \mathbb{Z}$ and $x=(x_1,x_2,\dots,x_n)\in \mathbb{Z}^n$, we denote the multiset
\begin{equation*}
M_{N}(x):= \{x_1,x_2 ,\dots,x_n\}\ {\rm {mod}}\ N :=
\{x_1 \ {\rm {mod}}\ N,x_2 \ {\rm {mod}}\ N,\dots,x_n \ {\rm {mod}}\ N\}.
\end{equation*}
Notice that  $M_{N}(x)=M_{N}(x_{\uparrow})$.
Thus it is equivalent to show that the multisets $M_{p}(a'+i)$, $i\in [p]$ are distinct.

Since $a$ has the form of \eqref{eq:d} and $N=\ell p$, we have
\begin{align}\nonumber
M_{N}(a+i)
&\ =\big\{a_{j_1}+i+j_2 \cdot p\ |\ j_1\in [n'],\ j_2\in [\ell]\big\}\ {\rm {mod}}\ N \\ \nonumber
&\ =\big\{(a_{j_1}+i)\ {\rm {mod}}\ p\, +j_2 \cdot p\ |\ j_1\in [n'],\ j_2\in [\ell]\big\}\ {\rm {mod}}\ N\\ \label{eq:M}
&\ =\big\{m +j_2 \cdot p\ |\ m\in M_{p}(a'+i),\ j_2\in [\ell]\big\}\ {\rm {mod}}\ N.
\end{align}
Here in the last equation, the multiplicities of $m$ are carried over to $M_{N}(a+i)$.

Recall that $\Pi_N(a)=p$; in other words, the multisets $M_{N}(a+i)$, $i\in [p]$ are distinct.
Hence the multisets $M_{p}(a'+i)$, $i\in [p]$ must be distinct. \qed
\end{proof}

Note that when $a$ is replaced by an arbitrary $a^*\in R_{N}(a)$, we have the same $p$, i.e.,  $\Pi_N(a^*)=\#R_{N}(a)$, and  
all the above arguments also work.

\medskip
Lemmas \ref{lem:a} and \ref{lem:a'} allow us to transform the {periodic} case to the {non-periodic} case through the following lemma.
Recall the definition of $S^{(r,k)}_n$ from Theorem \ref{thm:G}.

\begin{lem}\label{lem:b}
An $n$-dimensional vector $b$ is weakly increasing and satisfies 
$b\in S^{(r,k)}_n$ 
and $(b \ {\rm {mod}}\ N)_{\uparrow} \in R_{N}(a)$
if and only if $b$ has the form
\begin{align}\label{eq:b}
&\big({b'_1}, {b'_2},\dots,{b'_{n'}},
{b'_1+p}, {b'_2+p},\dots,{b'_{n'}+p},\\ \nonumber
& \ \ \dots, {b'_1+(\ell-1)p}, {b'_2+(\ell-1)p},\dots,{b'_{n'}+(\ell-1)p}\big)
\end{align} 
for some weakly increasing vector
$b'\in S^{(r,k')}_{n'}$ that satisfies 
$(b' \ {\rm {mod}}\ p)_{\uparrow} \in R_{N'}(a')$, where 
$$N=rn-k,\quad \ell=N/p,\quad {n'}=n/\ell,\quad k'=k/\ell,\quad a'=({a_1}, {a_2},\dots,{a_{n'}}),$$
and it follows that
$$N':=rn'-k'=(rn-k)/\ell=N/\ell=p>0.$$
Thus the assumption in Theorem \ref{thm:G} is fulfilled for $n'$, $r$ and $k'$, and $S^{(r,k')}_{n'}$ is well defined.
\end{lem}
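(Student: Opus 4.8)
The plan is to prove Lemma \ref{lem:b} as a clean ``factorization'' statement that mirrors the periodic structure of $a$ already established in Lemma \ref{lem:a}. The forward direction asks: if $b$ is weakly increasing, lies in $S^{(r,k)}_n$, and $(b\ \mathrm{mod}\ N)_\uparrow\in R_N(a)$, then $b$ must inherit the same block-periodic form \eqref{eq:d} with period $p$, and its initial block $b'$ must lie in $S^{(r,k')}_{n'}$ with $(b'\ \mathrm{mod}\ p)_\uparrow\in R_{N'}(a')$. The reverse direction reconstructs a valid $b\in S^{(r,k)}_n$ from any such $b'$. First I would handle the periodicity of $b$. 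The condition $(b\ \mathrm{mod}\ N)_\uparrow\in R_N(a)$ means the point multiset $\mathcal{Q}(b\ \mathrm{mod}\ N)$ is a clockwise rotation of $\mathcal{Q}(a)$; since the latter is invariant under rotation by $p\cdot 2\pi/N$ (this is exactly what Lemma \ref{lem:a} encodes), so is $\mathcal{Q}(b\ \mathrm{mod}\ N)$. Reading off the weakly increasing vector then forces $b\ \mathrm{mod}\ N$ to have the block form \eqref{eq:b} modulo $N$, i.e. $b_{j+n'}\equiv b_j+p\pmod N$; I would then argue that the actual (non-reduced) entries of $b$ also satisfy $b_{j+n'}=b_j+p$ exactly, using that $b$ is weakly increasing together with the constraint $b_n-b_1<rn-k=N$ from property I, which pins down the representatives and prevents a wrap-around from breaking the arithmetic progression.

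The second ingredient is to translate membership $b\in S^{(r,k)}_n$ into membership $b'\in S^{(r,k')}_{n'}$ of the initial block. Here I would use the explicit characterization of $S^{(r,k)}_n$ via properties I, II, III in Theorem \ref{thm:G}. Writing $b$ in the form \eqref{eq:b}, I would verify each property transfers to $b'$ under the parameter substitution $n'=n/\ell$, $k'=k/\ell$, $N'=p$: property I becomes $b'_1=b_1=w\in[k']$ together with $b'_{n'}-b'_1<p=N'$ (which follows from $b_n-b_1<N$ divided across the $\ell$ identical blocks); property II, $b'_{q'(w)}=w$, reduces because the block structure means the leading run of $w$'s in $b$ occupies exactly the leading run in $b'$, and $q(w)$ scales to $q'(w)$ since $q(w)=q'(w)$ when $w\le k'<k$ (the threshold $q(w)$ depends only on $w$ and $r$, not on $n$ or $k$); property III, the ``staircase whenever $b_j>w$'' bound $b_j\le (j-1)r$, would be checked on the first block and then propagated using $b_{j+mn'}=b_j+mp$ and the identity $(j+mn')-1)r = (j-1)r + mn'r = (j-1)r+mp$ (using $n'r - k' = p$, so $n'r=p+k'\geq p$; I must be careful that the $+mk'$ slack is absorbed correctly), confirming the staircase condition for the whole vector is equivalent to it for $b'$. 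The compatibility $(b'\ \mathrm{mod}\ p)_\uparrow\in R_{N'}(a')=R_p(a')$ follows from the same rotational argument applied to the first block, using $\Pi_p(a')=p$ from Lemma \ref{lem:a'}.

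The main obstacle I expect is the bookkeeping in property III, specifically verifying that the staircase bound $b_j\le (j-1)r$ is preserved exactly under the index shift $j\mapsto j+mn'$ and value shift $b_j\mapsto b_j+mp$, since this requires the precise relation $n'r=p+k'$ rather than $n'r=p$, and one must confirm that the extra $mk'$ never causes a violation (it only relaxes the bound, since $(j-1+mn')r=(j-1)r+mp+mk'\ge b_j+mp=b_{j+mn'}$). A secondary subtlety is the ``whenever $b_j>w$'' clause of property III versus the unconditional bound: I would need to confirm that the set of indices $j$ with $b_j>w$ in the full vector corresponds correctly, block by block, to those with $b'_{j'}>w$ in $b'$, which holds because $w=b_1=b'_1$ is the common minimum and the block shift by $mp>0$ pushes every entry of later blocks strictly above $w$. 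Finally, for the reverse direction I would simply assemble $b$ from $b'$ via \eqref{eq:b}, run the same equivalences backward, and note $N'=rn'-k'=(rn-k)/\ell=p$ to confirm the hypothesis of Theorem \ref{thm:G} holds for the primed parameters, so that $S^{(r,k')}_{n'}$ is well defined.
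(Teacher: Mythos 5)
Your overall strategy matches the paper's: establish the exact periodicity $b_{j+n'}=b_j+p$ from rotational invariance of the point multiset together with the window condition $b_n-b_1<N$, then transfer Properties I--III between $b$ and its first block $b'$ under the substitution $n\mapsto n'$, $k\mapsto k'$. However, there is a genuine gap in your necessary direction. To conclude $b'\in S^{(r,k')}_{n'}$ you need Property I with the \emph{reduced} parameter, namely $b'_1=w\le k'=k/\ell$; but membership $b\in S^{(r,k)}_n$ only gives $w\le k$, and when $\ell\ge 2$ we have $k'<k$. You simply assert ``$b'_1=b_1=w\in[k']$'' and later invoke ``$w\le k'<k$'' as if it were given. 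This inequality is not automatic: it is exactly the point where the parameter gets divided by $\ell$, and it requires an argument. The paper obtains it by noting that $b_{n'+1}=b_1+p>b_1=w$, so Property III of $b$ applies to the index $n'+1$ and yields $b_{n'+1}\le rn'$, whence $b'_1=b_{n'+1}-p\le rn'-p=rn'-(rn'-k')=k'$. Without this step the reduction to $S^{(r,k')}_{n'}$ does not go through, and your verification of Property II, which uses $w\le k'$ to get $q(w)\le n'$, also depends on it.

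A smaller issue: in the sufficient direction you flag but do not resolve the case of an index $j=dn'+j'$ with $d\ge 1$ where $b'_{j'}=w$ (so Property III of $b'$ gives no bound on $b'_{j'}$) yet $b_j=w+dp>w$. There one must check $w+dp\le(j-1)r$ directly; the paper does this via $w+d(rn'-k')\le k'-dk'+drn'\le dn'r\le(j-1)r$, which again uses $w\le k'$ --- available in this direction because it is part of the hypothesis $b'\in S^{(r,k')}_{n'}$. Spelling out that chain would complete the direction. Your remaining steps (lifting the congruence $b_{j+n'}\equiv b_j+p$ to an equality using the length-$<N$ window, transferring the $R$-membership via the multiset decomposition and Lemma \ref{lem:a'}, and the identification $N'=rn'-k'=p$) are sound and in line with the paper's proof.
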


Once the non-periodic case of Proposition \ref{prop:rotation} is proved,
the periodic case with given $n,r,k$ and $a$ will follow from Lemma \ref{lem:b}.
In fact, Lemma \ref{lem:b} implies that
the number of desired $b$'s in Proposition \ref{prop:rotation} is
the same as the number of desired $b'$'s in  Lemma \ref{lem:b}, and this number equals ${pk'}/({rn'-k'})=k'={pk}/({rn-k})$
by applying Lemma \ref{lem:a'} of non-periodicity to $a'$ and the non-periodic case of Proposition \ref{prop:rotation} to $n',r,k'$ and $a'$.
Hence Proposition \ref{prop:rotation} holds for the periodic case as well.

\begin{proof}[Proof of Lemma \ref{lem:b}]
{\it Sufficient condition.}
Suppose that $b$ has the form of \eqref{eq:b} for such a $b'$.
Then $b$  is weakly increasing because 
$b'$ is weakly increasing and  
\begin{equation}\label{eq:b'I}
b'_{n'}-b'_1<rn'-k'=p,
\end{equation}
as $b' \in S^{(r,k')}_{n'}$.

Next we show that $b\in S^{(r,k)}_n$ by verifying all the three properties in  Theorem \ref{thm:G}. 
We will use the properties in Theorem \ref{thm:G} for $b' \in S^{(r,k')}_{n'}$.

For Property I, we have $b_1=b_1' \in[k']\subseteq [k]$ by Property I of $b'$.
Further, it follows from \eqref{eq:b'I} that  
$$b_{n}-b_1 
=b'_{n'}-b'_1+(\ell-1)p
< 
p+(\ell-1)p=\ell p=N.$$

For Property II, observe that the $w$ is the same in cases $(n,r,k,b)$ and $(n',r',k',b')$;
therefore, by definition the $q(w)$ is the same in these two cases.
Since $w\le k'< rn'$, we see that $q(w)\le n'$ and hence 
$b_{q(w)}=b'_{q(w)}=w$ by Property II of $b'$. 

For Property III, for any $b_j>w$ with $j\in[n]$, let $d\ge 0$ and $j'\in [n']$ be integers such that $j=dn'+j'$.
Then we have the following two cases.

Case 1: $b'_{j'}>w$. 
Since $b'\in S^{(r,k')}_{n'}$, we have $b'_{j'} \le(j'-1)r$ by Property III. Thus 
\begin{equation*}
b_{j}= b'_{j'}+dp 
\le (j'-1)r+d(rn'-k')
< (j'-1+dn')r=(j-1)r.
\end{equation*}

Case 2: $ b_{j'}\le w<b_j$. Thus $d\ge 1$ and it follows that
\begin{equation*}
b_{j}= b'_{j'}+dp 
\le w+d(rn'-k')
\le k'-dk'+drn'
\le dn'r
\le (j-1)r.
\end{equation*}

Hence $b\in S^{(r,k)}_n$.

\smallskip
Now we show that $(b \ {\rm {mod}}\ N)_{\uparrow} \in R_{N}(a)$, in other words,
\begin{equation} \label{eq:b+i}
(b+i \ {\rm {mod}}\ N)_{\uparrow}=a,
\end{equation}
i.e.,
\begin{equation} \label{eq:i}
M_N(b+i)=M_N(a)
\end{equation}
for some $i\in \mathbb{Z}$.
Since $(b' \ {\rm {mod}}\ p)_{\uparrow} \in R_{p}(a')$, there exists $i\in \mathbb{Z}$ such that 
\begin{equation} \label{eq:i'}
M_p(b'+i)=M_p(a').
\end{equation}
Combining with \eqref{eq:M} for $b+i$ and for $a$ leads to
\eqref{eq:i}, as desired.

\smallskip
{\it Necessary condition.}
Suppose that $b$ is a weakly increasing  vector such that
$b\in S^{(r,k)}_n$ 
and $(b \ {\rm {mod}}\ N)_{\uparrow} \in R_{N}(a)$.
Then  there exists $i\in \mathbb{Z}$ that satisfies
\eqref{eq:b+i} and \eqref{eq:i}.

First we show that $b$ satisfies \eqref{eq:+p}.
Since $b\in S^{(r,k)}_n$, we have $b_n-b_1<N$.
Thus there exists $h\in [n]$ and $d\in \mathbb{Z}$ such that 
\begin{align*} 
(d-1)N <&\ b_1+i \le \cdots \le  b_h+i 
\le dN <b_{h+1}+i \le \cdots \le b_n+i \\
<&\ b_1+i+N \le  (d+1)N.
\end{align*}
Combining with \eqref{eq:b+i} yields
\begin{equation*} 
a = (b+i \ {\rm {mod}}\ N)_{\uparrow}
= \big( b_{h+1}+i,\dots,b_n+i,
b_1+i+N, \dots,  b_h+i+N \big) - dN,
\end{equation*}
i.e., 
\begin{equation*} 
b_j=
\begin{cases} 
a_{j-h}-i+dN, & {\mathrm{if}}\ j\in [h+1,n] \\ \nonumber
a_{j-h+n}-i+(d-1)N, & {\mathrm{if}}\ j\in [h]
\end{cases}.
\end{equation*}

For any given $j\in [n-n']$, we have the following three cases.

\smallskip
Case 1: $[j,j+n']\subseteq [h]$, 
i.e., $1\le j\in h-n'$. Then
$$b_{j+n'} -b_j = (a_{j+n'-h}-i+dN)-(a_{j-h}-i+dN)
=a_{j+n'-h}-a_{j-h}=p$$
by applying \eqref{eq:+p} to $j-h$.

\smallskip
Case 2: $[j,j+n']\subseteq [h+1,n]$, 
i.e., $h+1\le j\le n-n'$. Then
\begin{align*}
b_{j+n'} -b_j =&\ \big(a_{j+n'-h+n}-i+(d-1)N\big) - \big(a_{j-h+n}-i+(d-1)N\big)\\
=&\ a_{j+n'-h+n}-a_{j-h+n}=p
\end{align*}
by applying \eqref{eq:+p} to $j-h+n$.

\smallskip
Case 3: $j\subseteq [h]$ and $j+n'\subseteq [h+1,n]$, 
i.e., $h+1-n' \le j\le h$. Then
\begin{align*}
b_{j+n'} -b_j =&\ (a_{j+n'-h}-i+dN) - \big(a_{j-h+n}-i+(d-1)N\big)\\
=&\ a_{j+n'-h}-a_{j-h+n}+N
=-(\ell-1)p+\ell p = p
\end{align*}
by applying \eqref{eq:+p} to $j+n'-h,j+2n'-h,\dots,j+(\ell-1)n'-h$.

\smallskip
Hence  $b$ satisfies \eqref{eq:+p} and has the form of \eqref{eq:b} for some weakly increasing integer vector $b'$.

Further, applying \eqref{eq:i} and \eqref{eq:M} to $b+i$ and to $a$ leads to \eqref{eq:i'} and therefore
$(b' \ {\rm {mod}}\ p)_{\uparrow} \in R_{p}(a')$.

It remains to verifying all the three properties in  Theorem \ref{thm:G} for $(n',r,k',b')$ to obtain $b'\in S^{(r,k')}_{n'}$.
We will use the properties in Theorem \ref{thm:G} of $b \in S^{(r,k)}_{n}$.

For Property I, notice that $b_{n'+1}=b_1+p>b_1$. 
Applying Property III to $b_{n'+1}$, we get $b_{n'+1}\le rn'$. 
It follows that
$$b'_1=b_{n'+1}-p\le rn'-p = k'.$$
Further, applying the second part of Property I to $b$ yields
$$b'_{n'}-b'_1 
=b_{n'}-b_1-(\ell-1)p
<
N-(\ell-1)p=p=rn'-k'.$$

For Property II, similarly to that in the sufficient condition, we have $q(w)\le n'$ and hence 
$b'_{q(w)}=b_{q(w)}=w$ by Property II  of $b$.

For Property III, for any $b'_j>w$ with $j\in[n']\subseteq[n]$, we have $b_{j}=b'_{j}>w$. 
Therefore $b'_{j}=b_{j}\le (j-1)r$ by Property III  of $b$.
\qed
\end{proof}

Now let us show Proposition \ref{prop:rotation} for the non-periodic case $p=N$.

\smallskip

\noindent
{\it Proof of the non-periodic case of Proposition \ref{prop:rotation}.}
In this case, the vectors $(a+i\ {\rm {mod}}\ N)_{\uparrow}$, $i\in [N]$ are distinct.
To ease the notation, we write $S$ for $S^{(r,k)}_{n}$.
We will determine explicitly the  ${pk}/({rn-k})=k$ vectors in $S$
desired in Proposition \ref{prop:rotation}. 

For convenience, we denote $x_j=a_{j+1}\,(\le N)$, $j=0,\dots,n-1$,
and consider the weakly increasing vector $x=(x_0,\dots,x_{n-1})$
with $x_0=1$. 
Then $x\in S$ if and only if 
$x_j\le rj$ for all $j\in [n-1]$.
In general, a weakly increasing integer vector $y$ is in $S$ if and only if
\begin{enumerate}[label={\bfseries  \Roman*$^\prime$.}, leftmargin=*] 
\item  
$y_0=w$ for some $w\in [k]$, and $y_{n-1}-y_0< N$;

\item  
$y_{q(w)-1}=w$;

\item
$y_{j} \le jr$ for all $j\in [n-1]$ whenever $ y_{j}>w$.
\end{enumerate}
In the rest of the proof, all variables are integers, and for a vector
$y$, we denote by $y_j$ its $(j+1)$-th coordinate. 

\smallskip
Let $\Delta_j:=rj-x_j$, $j=0,1,\dots,n-1$.
Then $\Delta_0=-1$, and
$x\in S$ if and only if 
$\Delta_j\ge 0$ for all $j\in [n-1]$.

\begin{lem}\label{lemma:i_0}
There exists $i\in \mathbb{Z}$ such that the vector
$(x+i\ {\rm {mod}}\ N)_{\uparrow}\in S$, with the smallest coordinate
equal to $1$. More precisely, if $x\in S$, then we can take $i=0$;
otherwise, take 
$i=1-x_j$, where $j$ is the largest number in $[n-1]$ such that
$\Delta_j=\min_{j'\in [n-1]} \Delta_{j'}$.
\end{lem}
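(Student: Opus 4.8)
The plan is to split along the two cases already flagged in the statement. If $x\in S$ there is nothing to do: $x$ is weakly increasing, lies in $[N]^n$ with $N=rn-k$, and has $x_0=1$, so $i=0$ gives $(x+0\bmod N)_\uparrow=x\in S$ with smallest coordinate $1$. So I would concentrate on the case $x\notin S$, i.e. $\mu:=\min_{j\in[n-1]}\Delta_j<0$, and verify that the explicit shift $i=1-x_{j^*}$ works, where $j^*$ is the \emph{largest} index in $[n-1]$ with $\Delta_{j^*}=\mu$. Note this shift is designed to send the coordinate $x_{j^*}$ to the value $1$.

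First I would record a ``clean cut'' observation: $x_{j^*}>x_{j^*-1}$. Indeed, if $x_{j^*}=x_{j^*-1}$ with $j^*-1\ge1$ then $\Delta_{j^*-1}=\Delta_{j^*}-r<\mu$, contradicting minimality; and if $j^*=1$ then $x_{j^*-1}=x_0=1$, while $x_1=1$ would force $\Delta_1=r-1\ge0>\mu$, again a contradiction. Because of this, adding $i$ and reducing modulo $N$ splits the sorted output $z=(x+i\bmod N)_\uparrow$ cleanly into an unwrapped block coming from the original indices $j^*,\dots,n-1$ and a wrapped block coming from $0,\dots,j^*-1$: explicitly $z_m=x_{j^*+m}-x_{j^*}+1$ for $0\le m\le n-1-j^*$, and $z_m=x_{m-(n-j^*)}-x_{j^*}+1+N$ for $n-j^*\le m\le n-1$. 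In particular $z_0=1$, giving the claimed smallest coordinate, and the clean-cut inequality yields $z_{n-1}-z_0\le N-1<N$, so Property I holds; Property II is automatic since $w=z_0=1$ has $q(w)=1$.

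The heart of the matter is to show $z\in S$, i.e. $\Delta'_m:=rm-z_m\ge0$ for all $m\in[n-1]$, which gives Property III at once. Substituting $x_j=rj-\Delta_j$, $x_{j^*}=rj^*-\mu$ and $N=rn-k$ into the two formulas for $z_m$ collapses everything, after cancellation, to the two identities $\Delta'_m=\Delta_{j^*+m}-\mu-1$ on the unwrapped block and $\Delta'_m=\Delta_{m'}-\mu-1+k$ (with $m'=m-n+j^*$) on the wrapped block. On the unwrapped block the index $j^*+m$ strictly exceeds $j^*$, so by the choice of $j^*$ as the largest minimizer we have $\Delta_{j^*+m}\ge\mu+1$, hence $\Delta'_m\ge0$; this is exactly where the maximality of $j^*$ is indispensable, and it is the step I expect to be the main obstacle, since a non-maximal minimizer would only give $\Delta'_m\ge-1$. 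On the wrapped block I would use $\Delta_{m'}\ge\mu$ for $m'\ge1$ together with $k\ge1$, and the boundary value $\Delta_0=-1$ together with $\mu\le-1$ for $m'=0$, in both cases obtaining $\Delta'_m\ge k-1\ge0$. With all three properties verified, $z\in S$, completing the argument.
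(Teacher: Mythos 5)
Your proof is correct and follows essentially the same route as the paper's: the same shift $i=1-x_{j^*}$, the same split of the sorted vector into an unwrapped block (indices $\ge j^*$) and a wrapped block (indices $<j^*$), and the same use of the maximality of $j^*$ among the minimizers of $\Delta$ to get the strict inequality $\Delta_{j^*+m}\ge\mu+1$ on the unwrapped block. Your reformulation via $\Delta'_m=\Delta_{j^*+m}-\mu-1$ and $\Delta'_m=\Delta_{m'}-\mu-1+k$ is just the paper's coordinatewise bounds $\alpha_u\le ru$ rewritten, and your explicit ``clean cut'' check and verification of Properties I and II make explicit what the paper leaves implicit.
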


\proof[Lemma \ref{lemma:i_0}]
Assume that $x\notin S$, then $\Delta_j\le -1$ and $j\in [n-1]$ for
the $j$ taken in the lemma. Taking $i=1-x_j$, we get
\begin{align*} 
x+i\ {\rm {mod}}\ N
=&\ \big({2-x_j+N,x_1-x_j+1+N, \dots, x_{j-1}-x_j+1+N}, \\
&\ \  {1, x_{j+1}-x_j+1,\dots, x_{n-1}-x_j+1} \big),
\end{align*}  
and thus
\begin{align*} 
\alpha:=(x+i\ {\rm {mod}}\ N)_{\uparrow}
=&\ \big({1}, \underbrace{x_{j+1}-x_j+1}_{\alpha_1},\dots, \underbrace{x_{n-1}-x_j+1}_{\alpha_{n-1-j}},\\
&\ \  \underbrace{2-x_j+N}_{\alpha_{n-j}},
\underbrace{x_1-x_j+1+N}_{\alpha_{n-j+1}}, \dots, \underbrace{x_{j-1}-x_j+1+N}_{\alpha_{n-1}}
 \big).
\end{align*}

It follows from the definition of $j$ that
$x_j\ge rj+1$, and
for $j'>j$ we have
$\Delta_{j'}\ge \Delta_{j}+1$, and therefore
$x_{j'}-x_{j}\le r(j'-j)-1$;
for $j'<j$ we have
$\Delta_{j'}\ge \Delta_{j}$, and therefore
$x_{j'}-x_{j}\le r(j'-j)$.
Thus
\begin{align*} 
\alpha_{u}=&\  x_{j+u}-x_j+1\le  r(j+u-j)-1+1=ru,\quad u\in [n-1-j]\\
\alpha_{n-j}=&\ 2-x_j+rn-k\le 2-rj-1+rn-1=r(n-j)\\
\alpha_{n-j+u}=&\  x_u+1-x_j+rn-k\le r(u-j+n),\quad u\in [j-1].
\end{align*}  
Hence $\alpha\in S$.
\qed

\medskip
On the strength of Lemma \ref{lemma:i_0}, 
we can assume that $x \in S$ with $x_0=1$.
The following result determines the $k$ vectors in $S$ desired in
Proposition \ref{prop:rotation}. 

\begin{lem}\label{lemma:i}
Let $0=j_0<j_1<j_2<\cdots$ be the elements of the subset 
\begin{align*}  
J^*:= \{j\in J: \Delta_{j'} > \Delta_j,\ \mathrm{for\ all}\ n-1\ge
j'>j\}
\end{align*} 
of
\begin{align*}  
J:=\{0\} \cup \{j\in [n-1]:x_j>x_{j-1}\} 
\end{align*} 
and $m$ be the nonnegative integer determined by 
\begin{align*}  -1=\Delta_{j_0}<\Delta_{j_1}<\cdots<\Delta_{j_m}\le
  k-2<\Delta_{j_{m+1}}<\cdots 
\end{align*} 
(if $j_{m+1}$ does not exist, then set $j_{m+1}$ and
$\Delta_{j_{m+1}}$ to be infinity). 
In particular, $j_1$ is the largest number in $[n-1]$ such that
$\Delta_{j_1}=\min_{j\in [n-1]} \Delta_{j}\ge 0$. 

\smallskip
Then $y$ is a weakly increasing vector in $S$ such that  $(y \ {\rm
  {mod}}\ N)_{\uparrow} \in R_x$ if and only if  
\begin{enumerate}[label={\rm (\arabic*)}, leftmargin=*] 
\item 
$y=x+i$ with $0\le i\le \Delta_{j_1} \wedge (k-1)$,
where $\wedge$ represents the minimum function; or 

\smallskip
\item
$y=(x+i_1\ {\rm  {mod}}\ N)_{\uparrow} +i_2$ with
\begin{enumerate}[label={\rm (\roman*)}, leftmargin=0in] 
\item  
$i_1=1-x_{j_v}$ for some $v\in [m]$, and 

\item  
$0\le i_2=y_0-1\le \Delta_{j_{v+1}} \wedge (k-1)-\Delta_{j_v}-1<k-1$.
\end{enumerate}
\end{enumerate}

Further, the $k$ vectors  given in (1) and (2) are distinct.
\end{lem}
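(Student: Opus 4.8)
The plan is to show that every weakly increasing $y$ with $(y\ \mathrm{mod}\ N)_\uparrow\in R_x$, smallest entry $y_0\ge 1$, and span $y_{n-1}-y_0<N$ arises from $x$ by a single \emph{cyclic cut} followed by a \emph{vertical shift}, and then to read off exactly when the result lands in $S$. Writing $i_2=y_0-1\ge 0$, the normalized vector $y-i_2$ lies in $R_x$ and has smallest entry $1$; since its entries are the residues of a rotation of $x$, the value $1$ can occur only if the rotation amount is $1-x_j\ (\mathrm{mod}\ N)$ for some entry $x_j$, and taking $j$ to be the first position carrying that value (so $j\in J$) gives $y-i_2=(x+(1-x_j)\ \mathrm{mod}\ N)_\uparrow=:\alpha^{(j)}$. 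The explicit coordinates of $\alpha^{(j)}$ are exactly those computed in the proof of Lemma~\ref{lemma:i_0} with $i=1-x_j$: a \emph{tail} $\alpha^{(j)}_u=x_{j+u}-x_j+1$ for $u\in[n-1-j]$, followed by a \emph{wrapped} block $\alpha^{(j)}_{n-j+u}=x_u-x_j+1+N$ for $u\in[0,j-1]$. Thus every candidate $y$ equals $\alpha^{(j)}+i_2$ with $j\in J$ and $i_2\ge 0$, and it remains to decide membership in $S$.

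First I would dispose of Property~II: if $y$ satisfies Properties~I and III and (in the $1$-indexed form) $b_{q(w)}>w$, then Property~III forces $b_{q(w)}\le(q(w)-1)r<w$ by the minimality of $q(w)$, a contradiction; hence $b_{q(w)}=w$ is automatic. So only Properties~I and III must be checked for $y=\alpha^{(j)}+i_2$. Property~I reduces to $w=1+i_2\le k$, i.e.\ $i_2\le k-1$, together with $y_{n-1}-y_0<N$; the latter is automatic, since for $j\ge 1$ the largest wrapped entry is at most $N$ because $x_{j-1}<x_j$ for $j\in J$, and for $j=0$ the span of $y=x+i$ equals that of $x$.

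The heart of the argument is translating Property~III (``$y_u\le ur$ whenever $y_u>w$'') into inequalities in the quantities $\Delta_t=rt-x_t$. On the tail, $y_u>w$ means $x_{j+u}>x_j$, and a direct substitution turns $y_u\le ur$ into $i_2\le\Delta_{j+u}-\Delta_j-1$; on the wrapped block every entry exceeds $w$, and the analogous substitution gives $i_2\le\Delta_u-\Delta_j+(k-1)$ for $u\in[0,j-1]$. I would then identify the binding constraints using the combinatorics of $J^*$. Since $x\in S$ gives $\Delta_t\ge 0$ for $t\in[n-1]$ while $\Delta_0=-1$, the prefix minimum $\min_{0\le u\le j-1}\Delta_u$ equals $\Delta_0=-1$, so the wrapped block contributes exactly $i_2\le(k-1)-\Delta_j-1$; and one checks that the suffix minimum $\min_{j<t\le n-1}\Delta_t$ is attained precisely at the next element $j_{v+1}$ of $J^*$ (writing $j=j_v$), so the tail contributes $i_2\le\Delta_{j_{v+1}}-\Delta_{j_v}-1$. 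Intersecting with $i_2\ge 0$ and $i_2\le k-1$ yields $0\le i_2\le\Delta_{j_{v+1}}\wedge(k-1)-\Delta_{j_v}-1$, which is the range in (1) for $v=0$ (where there is no wrapped block and $\Delta_{j_0}=-1$) and in (2) for $v\ge 1$. The same tail analysis shows that a cut $j\in J\setminus J^*$ already fails Property~III at $i_2=0$, and that $v>m$ makes the admissible range empty; so the admissible cuts are exactly $j_0,\dots,j_m$.

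Finally I would verify the two remaining claims. For distinctness, the entries of $y=\alpha^{(j_v)}+i_2$ have minimum $1+i_2$ and reduce mod $N$ to the rotation $(x+(1-x_{j_v}+i_2)\ \mathrm{mod}\ N)_\uparrow\in R_x$; in the non-periodic case a coincidence $\alpha^{(j_v)}+i_2=\alpha^{(j_{v'})}+i_2'$ forces $i_2=i_2'$ and then $x_{j_v}\equiv x_{j_{v'}}\ (\mathrm{mod}\ N)$, hence $v=v'$. For the count, summing the interval lengths $\Delta_{j_{v+1}}\wedge(k-1)-\Delta_{j_v}$ over $v=0,\dots,m$ telescopes---using $\Delta_{j_0}=-1$, that $\Delta_{j_v}\wedge(k-1)=\Delta_{j_v}$ for $v\le m$, and $\Delta_{j_{m+1}}\wedge(k-1)=k-1$---to exactly $k$, matching $pk/(rn-k)=k$ in the non-periodic case. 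The main obstacle is the third step: the sign-sensitive substitution of $\Delta_t=rt-x_t$ into Property~III and the two extremal identifications (prefix minimum $=\Delta_0=-1$ and suffix minimum $=\Delta_{j_{v+1}}$), since these pin down the endpoints of the intervals and thereby force the total to be exactly $k$.
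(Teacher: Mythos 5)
Your plan follows essentially the same route as the paper's proof: normalize $y$ by subtracting $i_2=y_0-1$, identify the result as a cyclic cut $(x+(1-x_j)\bmod N)_\uparrow$ with $j\in J$, translate Property III into the inequalities $i_2\le\Delta_{j+u}-\Delta_j-1$ on the tail and $i_2\le\Delta_u-\Delta_j+k-1$ on the wrapped block, and read off the admissible cuts $j_0,\dots,j_m$ and shift ranges from $J^*$. Your two refinements --- deducing Property II from Properties I and III, and the explicit telescoping of the interval lengths to $k$ --- are sound and slightly streamline the paper's case-by-case verification; just note that the binding tail constraint is the minimum of $\Delta_t$ over those $t>j_v$ with $x_t>x_{j_v}$ (not over all $t>j_v$, where positions with $x_t=x_{j_v}$ could have smaller $\Delta_t$ without imposing any condition), and that restricted minimum is indeed $\Delta_{j_{v+1}}$ as you use.
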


\begin{rem}
Note that (1) is the special case of (2) with $i_1=0=v$ and $i_2=i$. 
\end{rem}

\proof[Lemma \ref{lemma:i}]
As a consequence of $p=N$, the vectors $(x+i_1\ {\rm
  {mod}}\ N)_{\uparrow}$ with $i_1$ given in (1) ($i_1=i$) and (2),
whose smallest coordinates are all $1$, are distinct. Thus the $k$
vectors  given in (1) and (2) are distinct. 

\smallskip
(1) If $y=x+i\in S$, then by definition we have
$1\le (x+i)_0\le k$ and $(x+i)_{j_1} \le rj_1$. 
Thus $0\le i\le \Delta_{j_1} \wedge (k-1)$.

Conversely, for any $y=x+i$ with $0\le i\le \Delta_{j_1} \wedge (k-1)$,
we have  $(y \ {\rm {mod}}\ N)_{\uparrow} \in R_x$,
$y_{n-1}-y_0=x_{n-1}-x_0<N$, and $1\le w:=y_0=(x+i)_0\le
(1+\Delta_{j_1}) \wedge k\le k$, and Property I$^\prime$ follows. 

For Property II$^\prime$, 
notice that for any $j\in [n-1]$ such that $x_j\ge 2$, since
$rj-x_j=\Delta_j\ge \Delta_{j_1}$,  
we have $j\ge (2+\Delta_{j_1})/r>w/r$, and therefore $j\ge q(w)$. 
Hence $y_{q(w)-1}=w$.

Finally for Property III$^\prime$,  for all $j\in [n-1]$, since
$\Delta_{j_1}\le  \Delta_j$, we get 
$x_j-x_{j_1}\le r(j-j_1)$, 
and therefore 
$y_j=(x+i)_j=x_j+i\le r(j-j_1)+\Delta_{j_1}=rj$.

\smallskip
(2) 
If $y$ is a weakly increasing vector in $S$ such that $(y \ {\rm
  {mod}}\ N)_{\uparrow}\in R_x$ but $y$ does not have the form
described in (1), 
then by Lemma \ref{lemma:x_1=1} below, we get $\alpha:=y-i_2\in S$,
$\alpha_0=1$ and $\alpha\in R_x$, where $i_2=y_0-1\ge 0$. 

\begin{lem}\label{lemma:x_1=1}
If $b\in S$ is weakly increasing, then $b+i\in S$ for all $i\in \{0,-1,\dots,-b_1+1\}$.
Further, if $(b \ {\rm {mod}}\ N)_{\uparrow}\in R_{N}(a)$, then $(b+i
\ {\rm {mod}}\ N)_{\uparrow}\in R_{N}(a)$. 

In particular, when $i=-b_1+1$, the smallest coordinate of $b+i$ is $1$. 
Thanks to \eqref{eq:b_j'}, we have $b+i\in [N]^n$, and
therefore $b+i \ {\rm {mod}}\ N=b+i$. 
If $(b \ {\rm {mod}}\ N)_{\uparrow}\in R_{N}(a)$, then
$(b+i)_{\uparrow}=(b+i \ {\rm {mod}}\ N)_{\uparrow}\in R_{N}(a)$. 
\end{lem}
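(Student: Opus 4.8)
The plan is to treat the lemma's two assertions in turn, and the structure of the proof will mirror the characterization of $S = S^{(r,k)}_n$ by Properties I$^\prime$, II$^\prime$, III$^\prime$ used in the non-periodic case. For the first assertion I would fix a weakly increasing $b\in S$, write $w$ for its smallest coordinate and $w' := w+i$, and verify the three properties directly for $c := b+i$. Since $-b_1+1\le i\le 0$ we have $1\le w'\le w\le k$, so the first half of Property I$^\prime$ holds with smallest coordinate $w'$; and because a constant shift preserves differences, the largest minus smallest coordinate of $c$ equals that of $b$, which is $<N$, giving the second half of Property I$^\prime$.

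The delicate points are Properties II$^\prime$ and III$^\prime$. For II$^\prime$ the key observation is that $q(w') = \lceil w'/r\rceil \le \lceil w/r\rceil = q(w)$, since $w'\le w$ and the ceiling is monotone. As $b$ is weakly increasing with smallest coordinate $w$ and satisfies $b_{q(w)-1}=w$, its first $q(w)$ coordinates all equal $w$; hence the first $q(w')\le q(w)$ coordinates of $c$ all equal $w+i=w'$, which is exactly the statement $c_{q(w')-1}=w'$. For III$^\prime$ the crucial feature is that the comparison threshold moves in lock-step with the entries: $c_j>w'$ holds precisely when $b_j>w$, and for each such $j$ Property III$^\prime$ for $b$ gives $b_j\le jr$, so $c_j = b_j+i \le jr+i \le jr$ because $i\le 0$. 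This establishes $c=b+i\in S$ for all admissible $i$.

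The second assertion is the shift-invariance of $R_N(a)$, and here there is essentially nothing to prove beyond unwinding the definition. Using the multiset notation $M_N(\cdot)$ from the proof of Lemma \ref{lem:a'}, the hypothesis $(b\bmod N)_\uparrow\in R_N(a)$ means $M_N(b)=M_N(a+j)$ for some $j\in\mathbb{Z}$; shifting by $i$ and reducing mod $N$ rotates the point multiset by $i\cdot 2\pi/N$, so $M_N(b+i)=M_N(a+j+i)$, whence $(b+i\bmod N)_\uparrow = (a+(j+i)\bmod N)_\uparrow\in R_N(a)$, since $R_N(a)$ is exactly the rotation orbit of $a$.

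Finally, for the ``in particular'' clause I would specialize to $i=-b_1+1$, so that the smallest coordinate of $b+i$ equals $1$. The only thing left to check is $b+i\in[N]^n$: its largest coordinate is $b_n+i=b_n-b_1+1$, and \eqref{eq:b_j'} (equivalently the second half of Property I, $b_n-b_1<N$) bounds this by $N$, while the smallest coordinate is $1\ge 1$. Hence $b+i\bmod N=b+i$, so $(b+i)_\uparrow=(b+i\bmod N)_\uparrow$, and the second assertion then yields $(b+i)_\uparrow\in R_N(a)$. I do not expect a genuine obstacle here, as the whole statement reduces to routine verification; the only step demanding care is Property II$^\prime$, where one must confirm that lowering the first coordinate also lowers $q(w)$ and therefore keeps the initial block of equal entries long enough, with the handling of Property III$^\prime$ delicate only in that the threshold $w$ must be shifted together with the entries.
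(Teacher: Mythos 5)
Your proof is correct and matches the paper's approach: the paper simply states that Lemma \ref{lemma:x_1=1} ``follows immediately from the definition of $S$ and $R_N(a)$,'' and your write-up supplies exactly the routine verification of Properties I$'$--III$'$ (noting that both the entries and the threshold $w$ shift by $i$, and that $q(w+i)\le q(w)$) together with the rotation-orbit invariance of $R_N(a)$. Nothing further is needed.
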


Lemma \ref{lemma:x_1=1} follows immediately from the definition of $S$ and $R_{N}(a)$. 

Since $\alpha\neq x$, we have 
$\alpha=(x+i_1\ {\rm {mod}}\ N)_{\uparrow}$ for some $i_1\in \{-1,-2,\dots,1-N\}$.
Recall that $\alpha_0=1$, and thus $i_1=1-x_j$ for some $j\in [n-1]$.
If there is more than one $j$ such that $i_1=1-x_j$, we choose the smallest one, i.e., the $j\in J$.  Then
\begin{align*} 
x+i_1\ {\rm {mod}}\ N
=&\ \big({2-x_j+N,x_1-x_j+1+N, \dots, x_{j-1}-x_j+1+N}, \\
&\ \ {1, x_{j+1}-x_j+1,\dots, x_{n-1}-x_j+1} \big),
\end{align*}  
and
\begin{align*} 
\alpha:=(x+i_1\ {\rm {mod}}\ N)_{\uparrow}
=&\ \big({1}, \underbrace{x_{j+1}-x_j+1}_{\alpha_1},\dots, \underbrace{x_{n-1}-x_j+1}_{\alpha_{n-1-j}},\\
&\ \ \underbrace{2-x_j+N}_{\alpha_{n-j}},
\underbrace{x_1-x_j+1+N}_{\alpha_{n-j+1}}, \dots, \underbrace{x_{j-1}-x_j+1+N}_{\alpha_{n-1}}
 \!\big).
\end{align*}   

Recall that $\alpha\in S$ if and only if
\begin{align} \label{eq:ru}
\alpha_{u}\le ru,\quad \ \mathrm{for\ all}\ \ u\in [n-1].
\end{align}   
Applying to $u=1,\dots,n-1-j$ leads to
$$x_{j'}-x_j+1\le r(j'-j),\ \text{i.e.,}\
\Delta_j<\Delta_{j'},\quad \mathrm{for\ all}\  j'<j\le n-1;$$
applying to $u=n-j$ leads to
$$2-x_j+rn-k\le r(n-j),\ \text{i.e.,}\
\Delta_j\le k-2. $$
Therefore $j=j_v$ for some $v\in[m]$.

Conversely, from the above argument we see that if $i_1=1-x_{j_v}$ for some $v\in[m]$, then we have $\alpha_{u}\le ru$ for all
$u\in [n-j]$. 
Further, we have $$\alpha_{n-j+u}=x_u-x_j+1+N \le
ru+\Delta_j-rj+1+rn-k<r(n-j+u)$$ for all $u\in[j-1]$. Hence $\alpha\in
S$. 

\smallskip
It remains to show that $\alpha+i_2\in S$ if only if $i_2$ satisfies
the inequality in (ii). 

If $\alpha+i_2\in S$, then applying \eqref{eq:ru} to
$\alpha':=\alpha+i_2$ and $u=j_{v+1}-j_v$ (if exists) leads to 
$$x_{j_{v+1}}-x_{j_v}+1+i_2\le r(j_{v+1}-j_v),\quad \text{i.e.,}\quad
i_2\le \Delta_{j_{v+1}}-\Delta_{j_v}-1;$$
applying \eqref{eq:ru} to $\alpha':=\alpha+i_2$ and $u=n-{j_v}$ leads to
$$2-x_{j_v}+rn-k+i_2\le r(n-{j_v}),\ \text{i.e.,}\
i_2\le  k-2-\Delta_{j_v}. $$
Recall that $i_2=y_0-1\ge 0$, and thus $i_2$ satisfies the inequality in (ii).

Conversely,  if $i_2$ satisfies the inequality in (ii), then $\alpha'\in S$.
In fact, we have
$1\le w:=1+i_2 \le k$ and 
$\alpha'_{n-1}-\alpha'_0=\alpha_{n-1}-\alpha_0<N$, and Property I$^\prime$
then  follows. 

For Property II$^\prime$, by the definition of $j_{v+1}$,
we have $\Delta_{u}\ge \Delta_{j_{v+1}}$ for any $j_v<u\in J$, and
hence for any $j_v<u\le n-1$ such that $x_u>x_{j_v}$. 
Thus $ru-x_u\ge \Delta_{j_{v+1}}$. It follows that
$$ru\ge \Delta_{j_{v+1}}+x_u > \Delta_{j_{v+1}}+x_{j_v} =
\Delta_{j_{v+1}}+r j_v-\Delta_{j_{v}}  $$  
and 
$$u-j_v>(\Delta_{j_{v+1}}-\Delta_{j_{v}} ) /r\ge w/r,
\ \text{i.e.,}\ u\ge q(w)+j_v.$$ 
Hence $\alpha'_{q(w)-1}=x_{q(w)-1+j_v}-x_{j_v}+w=w$.

Finally for Property III$^\prime$,  
from the above argument we see that $\alpha'_{u}\le ru$ for
$u=j_{v+1}-j_v,n-{j_v}$. 
Further, we have $$\alpha'_{n-j_v+u}=x_u-x_{j_v}+1+N+i_2 \le
ru-x_{j_v}+1+rn-k+k-2-\Delta_{j_v}<r(n-{j_v}+u)$$  
for all $u\in[{j_v}-1]$. 
For $j_v+1\le u\le n-1$ such that $x_u>x_{j_v}$ and $u\in J$, 
we have $\Delta_{u}\ge \Delta_{j_{v+1}}$ by the definition of $j_{v+1}$, 
and therefore 
$$\alpha'_{u-j_v}=x_{u}-x_{j_v}+w
\le (ru-\Delta_{j_{v+1}})-x_{j_v}+(\Delta_{j_{v+1}}-\Delta_{j_{v}}) =
r(u-j_v).$$ 

Hence $\alpha'\in S$, as desired.
\qed

\medskip
This completes the proof.
\qed

\medskip
\textsc{Note.} We have been unable to find another proof or a satisfactory $q$-analogue
of Theorem~\ref{thm:G}, generalizing Theorem~\ref{thm:q}.
In particular, it would be interesting to find a geometric proof of  Proposition \ref{prop:rotation} in terms of paths.

\section{The $r$-parking function basis}
Equation~\eqref{eq:cprk} and other considerations suggest
looking at products of the symmetric functions $F^{(r,k)}_n$ for various
values of $n$. Thus for any partition $\lambda$ define
  $$ F^{(r,k)}_\lambda =
     F^{(r,k)}_{\lambda_1}F^{(r,k)}_{\lambda_2}\cdots, $$ 
where $F_0=1$, and $ F^{(r)}_\lambda = F^{(r,1)}_\lambda$. 

Recall that $\Lambda$ denotes the ring of all symmetric functions that
can be written as an integer linear combination of the monomial
symmetric functions $m_\lambda$ (or equivalently, $s_\lambda$,
$h_\lambda$, or $e_\lambda$). 

\begin{prop}
Fix $r\geq 1$. Then the symmetric functions $F_\lambda^{(r)}$, where
$\lambda$ ranges over all partitions of all $n\geq 0$, form an integral
basis for the ring $\Lambda$.
\end{prop}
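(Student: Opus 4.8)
The plan is to argue degree by degree. The ring $\Lambda=\bigoplus_{n\ge 0}\Lambda_n$ is graded, and for each $n$ the set $\{h_\mu:\mu\vdash n\}$ is a $\mathbb{Z}$-basis of the free module $\Lambda_n$. Since $F_\lambda^{(r)}=F_{\lambda_1}^{(r)}F_{\lambda_2}^{(r)}\cdots$ and $h_\mu=h_{\mu_1}h_{\mu_2}\cdots$ are both multiplicative, it suffices to show that in each degree $n$ the transition matrix expressing $\{F_\lambda^{(r)}:\lambda\vdash n\}$ in the $h$-basis is unitriangular for a suitable ordering of partitions; its determinant is then $\pm1$, so $\{F_\lambda^{(r)}:\lambda\vdash n\}$ is itself a $\mathbb{Z}$-basis of $\Lambda_n$, and the union over all $n$ gives an integral basis of $\Lambda$. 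Everything thus reduces to understanding the single generators $F_m^{(r)}=F_m^{(r,1)}$.

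First I would record the leading-term structure of $F_m^{(r)}$. Setting $k=1$ in \eqref{eq:hlambda}, the coefficient of $h_\mu$ in $F_m^{(r)}$ is $\frac{1}{rm+1}\binom{rm+1}{d_1(\mu),\dots,d_m(\mu),rm+1-\ell(\mu)}$. These coefficients are nonnegative integers because $F_m^{(r)}=\mathrm{ch}\,\mathrm{PF}_m^{(r,1)}$ is the Frobenius characteristic of a permutation action, the coefficient of $h_\mu$ counting the weakly increasing $(r,1)$-parking functions of length $m$ with part multiplicities $\mu$; in particular $F_m^{(r)}\in\Lambda$. The unique one-part partition $\mu=(m)$ gives coefficient $\frac{1}{rm+1}\binom{rm+1}{1,rm}=1$, and it is the only $h_\mu$ with $\ell(\mu)=1$ that occurs. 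Hence
\begin{equation*}
F_m^{(r)}=h_m+\sum_{\substack{\mu\vdash m\\ \ell(\mu)\ge 2}}c_{m,\mu}\,h_\mu,\qquad c_{m,\mu}\in\mathbb{Z}.
\end{equation*}

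Next I would propagate this through the product. Expanding $F_\lambda^{(r)}=\prod_i F_{\lambda_i}^{(r)}$ by the displayed formula, selecting $h_{\lambda_i}$ from every factor yields exactly $h_\lambda$, while any other selection replaces at least one single-part $h_{\lambda_i}$ by some $h_\mu$ with $\ell(\mu)\ge 2$, strictly increasing the total number of parts. Therefore
\begin{equation*}
F_\lambda^{(r)}=h_\lambda+\sum_{\substack{\mu\vdash n\\ \ell(\mu)>\ell(\lambda)}}d_{\lambda\mu}\,h_\mu,\qquad d_{\lambda\mu}\in\mathbb{Z}.
\end{equation*}
Ordering the partitions of $n$ by nondecreasing $\ell$ (ties broken arbitrarily), the matrix of coefficients is upper triangular with all diagonal entries $1$: each $F_\lambda^{(r)}$ involves only $h_\mu$ with $\ell(\mu)\ge\ell(\lambda)$, and among those with $\ell(\mu)=\ell(\lambda)$ only $\mu=\lambda$ appears. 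Its determinant is $1$, so it is invertible over $\mathbb{Z}$, and $\{F_\lambda^{(r)}:\lambda\vdash n\}$ is a $\mathbb{Z}$-basis of $\Lambda_n$.

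The only genuine work is pinning down the leading term of each $F_m^{(r)}$ (that the $h_m$-coefficient is exactly $1$ and every other monomial has at least two parts); after that the triangularity is forced by multiplicativity, so I expect no serious obstacle. As an alternative to the determinant computation, one could argue by induction: the relation $h_m=F_m^{(r)}-\sum_{\ell(\mu)\ge 2}c_{m,\mu}h_\mu$ expresses $h_m$ as an integer polynomial in $F_1^{(r)},\dots,F_m^{(r)}$, giving $\Lambda=\mathbb{Z}[F_1^{(r)},F_2^{(r)},\dots]$; a rank count (the free module $\Lambda_n$ has rank $p(n)$, equal to the number of products $F_\lambda^{(r)}$ of degree $n$) then forces algebraic independence, so the $F_\lambda^{(r)}$ form a basis.
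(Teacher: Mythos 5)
Your proof is correct and follows essentially the same route as the paper's: both establish that $F^{(r)}_\lambda = h_\lambda + (\text{terms } h_\mu \text{ lower in a suitable order})$ with integer coefficients and leading coefficient $1$, so the transition matrix to the $h$-basis is unitriangular and hence invertible over $\mathbb{Z}$. The only cosmetic difference is that you order partitions by number of parts while the paper orders by refinement, and you spell out the computation of the $h_m$-coefficient from \eqref{eq:hlambda}, which the paper leaves implicit.
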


\proof
We need to show that for each $n$, the set $\{F^{(r)}_\lambda\st
\lambda\vdash n\}$ is an integral basis for the (additive) group
$\Lambda^n$ of all homogeneous symmetric functions of degree $n$
contained in $\Lambda$.  Let $\lambda^1,\lambda^2,\dots$ be any
ordering of the partitions of $n$ that is compatible with refinement,
that is, if $\lambda^i$ is a refinement of $\lambda^j$ then $i\leq
j$. Now $F^{(r)}_n=h_n + \cdots\in\Lambda^n$. Hence
$F_\lambda^{(r)}=h_\lambda\, +$ terms involving $h_\mu$ where $\mu$
refines $\lambda$. Hence the transition matrix for expressing the
$F^{(r)}_\lambda$'s in terms of the $h_\lambda$'s is lower triangular
with 1's on the main diagonal. Since the $h_\lambda$'s form an
integral basis, the same is true of the $F^{(r)}_\lambda$'s.
\qed

\medskip
Now that for each $r\geq 1$ we have this ``parking function basis''
$\{F^{(r)}_\lambda\}$, we can ask about its expansion in terms of
other bases and vice versa. If we restrict ourselves to the six
``standard'' bases (where the power sums $p_\lambda$ are a basis over
$\qq$ but not $\zz$), we thus have twelve transition matrices to
consider. We can also ask about various scalar products such as
$\langle F_\lambda^{(r)},F_\mu^{(r)}\rangle$. Moreover,
we could also consider the basis $\{\pflad\}$ dual to
$\{F^{(r)}_\lambda\}$, i.e.,
  $$ \langle F^{(r)}_\lambda, \tilde{F}^{(r)}_\mu\rangle =
     \delta_{\lambda\mu}. $$ 
However, these dual bases will not yield any new coefficients since
the dual basis to a standard basis is also a standard basis (up to a
normalizing factor in the case of $p_\lambda$).  We have not
systematically investigated these problems. Some miscellaneous results
are below.

We first consider scalar products $\langle
F_\mu^{(r,k)},F_\lambda^{(r,k)}\rangle$. We can give an explicit
formula when $\mu=(n)$. In fact, we can give a more general result
where $F_\lambda^{(r,k)}$ is replaced with a ``mixed'' product.

\begin{thm} \label{thm:pfsp}
Let $\lambda\vdash n$, and let $r,r_1,r_2,\dots$ be positive
integers. Let $k,k_1,$ $k_2,\dots$ be integers or even indeterminates.
Then 
  $$ \left\langle \fnrx, \prod_{i\geq 1} F_{\lambda_i}^{r_i,k_i} \right\rangle
    = \frac{k}{rn+k}\prod_{i\geq 1}\frac{k_i}{r_i\lambda_i+k_i}
    \binom{(rn+k)(r_i\lambda_i+k_i)+\lambda_i-1}{\lambda_i}. $$ 
\end{thm}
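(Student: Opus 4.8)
The plan is to evaluate the scalar product by expanding its two arguments in dual bases and invoking the duality $\langle h_\mu,m_\nu\rangle=\delta_{\mu\nu}$. The decisive choice is to expand $F_n^{(r,k)}$ in the \emph{monomial} basis and the product $\prod_i F_{\lambda_i}^{(r_i,k_i)}$ in the \emph{complete homogeneous} basis: the latter is essential because $h$-functions multiply by union of partitions, $h_\alpha h_\beta=h_{\alpha\cup\beta}$, so the product of the $F$'s expands without cross terms.

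First I would record from Theorem~\ref{thm:cprk} the monomial expansion
\[
F_n^{(r,k)}=\frac{k}{rn+k}\sum_{\mu\vdash n}\left[\prod_{j}\binom{\mu_j+rn+k-1}{\mu_j}\right]m_\mu,
\]
together with the $h$-expansion \eqref{eq:hlambda} applied to each factor $F_{\lambda_i}^{(r_i,k_i)}$. Multiplying the $h$-expansions over $i$ and using $h_\alpha h_\beta=h_{\alpha\cup\beta}$ writes $\prod_i F_{\lambda_i}^{(r_i,k_i)}$ as a sum over tuples $(\nu^{(1)},\nu^{(2)},\dots)$ with $\nu^{(i)}\vdash\lambda_i$, the $h$-index being the multiset union $\bigcup_i\nu^{(i)}$. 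Pairing against the monomial expansion of $F_n^{(r,k)}$ via $\langle m_\mu,h_\nu\rangle=\delta_{\mu\nu}$ then yields
\[
\left\langle F_n^{(r,k)},\prod_i F_{\lambda_i}^{(r_i,k_i)}\right\rangle=\frac{k}{rn+k}\sum_{(\nu^{(i)})}\left(\prod_i\frac{k_i}{r_i\lambda_i+k_i}\binom{r_i\lambda_i+k_i}{d_1(\nu^{(i)}),\dots,r_i\lambda_i+k_i-\ell(\nu^{(i)})}\right)\prod_{p}\binom{p+rn+k-1}{p},
\]
where the final product runs over all parts $p$ of $\bigcup_i\nu^{(i)}$.

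The key observation is that this final product factors over $i$, since the parts of $\bigcup_i\nu^{(i)}$ are exactly the parts of the individual $\nu^{(i)}$. Consequently the sum over tuples factors into a product, over $i$, of independent sums over $\nu^{(i)}\vdash\lambda_i$, and each such sum is an instance of the single identity
\[
\sum_{\nu\vdash N}\binom{A}{d_1(\nu),\dots,d_N(\nu),A-\ell(\nu)}\prod_{p}\binom{p+B-1}{p}=\binom{AB+N-1}{N},
\]
with $A=r_i\lambda_i+k_i$, $B=rn+k$, and $N=\lambda_i$. Substituting this evaluation back into the factored product reproduces precisely the claimed formula, so the theorem reduces entirely to the displayed identity.

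It remains to prove that identity, which I regard as the main (and essentially the only non-routine) step. I would first establish it for positive integers $A,B$ by generating functions: the right side is $[t^N](1-t)^{-AB}$, while expanding $\bigl((1-t)^{-B}\bigr)^A=(1-t)^{-AB}$ as a product of $A$ copies of $\sum_{m\ge 0}\binom{m+B-1}{m}t^m$ gives $[t^N]$ as a sum over weak compositions of $N$ into $A$ parts; grouping these by the partition $\nu$ formed by the nonzero parts yields the multinomial $\binom{A}{d_1(\nu),\dots,A-\ell(\nu)}$ (counting placements of $\nu$ into the $A$ slots) times $\prod_p\binom{p+B-1}{p}$, which is exactly the left side. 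Finally, both sides are polynomials in $A$ and $B$ of bounded degree (the multinomial is a polynomial in $A$, each $\binom{p+B-1}{p}$ a polynomial in $B$, and $\binom{AB+N-1}{N}$ a polynomial in $AB$), so agreement on all positive integers forces equality over the polynomial ring; this covers the cases where $k$ or the $k_i$ are indeterminates, completing the argument.
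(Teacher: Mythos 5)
Your proof is correct, but it follows a third route distinct from both of the proofs given in the paper. The paper's first proof expands everything in power sums via $H(u)^b=\sum_\mu z_\mu^{-1}b^{\ell(\mu)}p_\mu u^{|\mu|}$, uses $\langle p_\lambda,p_\mu\rangle=z_\lambda\delta_{\lambda\mu}$, and finishes with $\sum_{\nu\vdash m}z_\nu^{-1}u^{\ell(\nu)}=\binom{u+m-1}{m}$; its second proof expands both arguments in complete homogeneous functions and interprets $\langle h_\lambda,h_\mu\rangle$ as a count of nonnegative integer matrices with prescribed row and column sums. You instead pair the monomial expansion of $F_n^{(r,k)}$ against the $h$-expansion of the product and use $\langle m_\mu,h_\nu\rangle=\delta_{\mu\nu}$, which, as you note, exploits $h_\alpha h_\beta=h_{\alpha\cup\beta}$ to avoid cross terms and cleanly factors the answer over $i$. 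All three arguments ultimately converge on the same enumerative fact --- that $\binom{AB+N-1}{N}$ counts weak compositions of $N$ into $AB$ parts --- but they package it differently: the paper's second proof reaches it through transportation matrices, while you reach it through grouping the compositions in $[t^N]\bigl((1-t)^{-B}\bigr)^A$ by the partition of nonzero parts, which is arguably the most self-contained of the three. Your closing polynomiality argument correctly handles indeterminate $k$ and $k_i$ (both sides of your key identity are polynomials in $A$ and $B$ agreeing on $\mathbb{Z}_{>0}^2$), matching the level of generality claimed in the theorem; the one thing worth making explicit is that you are invoking the equivalence of the monomial and $h$-expansions in Theorem~\ref{thm:cprk} for indeterminate $k$, which the paper supplies.
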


\proof
\emph{First proof.}
If $\lambda=(\lambda_1,\lambda_2,\dots)$ then write $[t^\lambda]$ for
the operator that takes the coefficient of
$t_1^{\lambda_1}t_2^{\lambda_2}\cdots$. By equation~\eqref{eq:[t^n]H} we
have
\begin{align*}
\left\langle\!\! \fnrx\!, \prod_{i\geq 1}\! F_{\lambda_i}^{r_i,k_i} \!\!\right\rangle\!
=\! \frac{k}{rn\!+\!k}\! \prod_{i\geq 1}\! \frac{k_i}{\lambda_i\!+\!k_i}[t^\lambda]
\!   \left\langle\! H(1)^{rn+k}\!,H(t_1)^{r_1\lambda_1+k_1} \!
    H(t_2)^{r_2\lambda_2+k_2}\!\cdots\right\rangle\!.
\end{align*}
Writing $H(u)^b=\prod_i (1-x_iu)^b$, taking logarithms,
expanding in terms of the power sums $p_k$, and then exponentiating,
we get the well-known result
   $$ H(u)^b = \sum_\mu  z_\mu^{-1}b^{\ell(\mu)}p_\mu
       u^{|\mu|}, $$ 
where $\mu$ ranges over all partitions of all integers $j\geq 0$. (For
the case $b=1$, see \cite[(7.22)]{ec2}.) 

Since $\langle p_\lambda,p_\mu\rangle = z_\lambda\delta_{\lambda\mu}$,
we get
\begin{align} 
& \left\langle \fnrx, \prod_{i\geq 1} F_{\lambda_i}^{(r_i,k_i)}
 \right\rangle \nonumber \\
=&\, \frac{k}{rn\!+\!k}\prod_{i\geq 1} \frac{k_i}{r_i\lambda_i\!+\!k_i}
[t^\lambda]\! \left\langle \sum_{\mu\vdash n} z_\mu^{-1}(rn+k)^{\ell(\mu)}  p_\mu, \prod_{i\geq 1} \sum_{\nu\vdash\lambda_i}\! z_\nu^{-1}
   (r_i\lambda_i+k_i)^{\ell(\nu)}t_i^{|\nu|}p_\nu\!\!
   \right\rangle\nonumber\\  
= &\, \frac{k}{rn\!+\!k}\prod_{i\geq 1} \frac{k_i}{r_i\lambda_i\!+\!k_i}\cdot \prod_{i\geq 1} \sum_{\nu\vdash \lambda_i} z_\nu^{-1}(rn+k)^{\ell(\nu)} (r_i\lambda_i+k_i)^{\ell(\nu)}. \label{eq:scpr} 
\end{align}
Now in general (equivalent for instance to \cite[Prop.~1.3.7]{ec1}),
  $$ \sum_{\nu\vdash m}z_\nu^{-1}u^{\ell(\nu)} =
         \binom{u+m-1}{m}. $$
Hence the proof follows immediately from equation~\eqref{eq:scpr}.

\medskip
\emph{Second proof.}
From equation~\eqref{eq:[t^n]H} we see that
\begin{align*}
 \left\langle \fnrx, \prod_{i\geq 1} F_{\lambda_i}^{r_i,k_i} \right\rangle 
     = &\ \frac{k}{rn+k}\prod_{i\geq 1}
  \frac{k_i}{r_i\lambda_i+k_i}\cdot
    \left\langle \sum_{a_1+\cdots+a_{rn+k}=n}h_{a_1}\cdots h_{a_{rn+k}},\right.\\
   &\quad \quad \quad \quad \quad \left.\prod_{i\geq 1} 
     \sum_{b_{i,1}+\cdots+b_{i,r_in+k_i}=\lambda_i}h_{b_{i,1}}\cdots
     h_{b_{i,r_in+k_i}}  \right\rangle, 
\end{align*}
where $a_i,b_{i,j}\geq 0$. Let
   $$ Z=\frac{k}{rn+k}\prod_{i\geq 1}\frac{k_i}{r_i\lambda_i+k_i}. $$
Now $\langle h_\lambda,h_\mu\rangle$ is equal to the number of
matrices $(a_{ij})_{i,j\geq 1}$ of nonnegative integers with row sum
vector $\lambda$ and column sum vector $\mu$ \cite[(7.31)]{ec2}.
Hence $\frac{1}{Z}\left\langle \fnrx, \prod_i F_{\lambda_i}^{r_i,k_i}
\right\rangle$ is equal to the total number of $(rn+j)\times
\big(\sum_i (r_i+n+k_i)\big)$ matrices of nonnegative integers whose
entries sum to $n$, such that the first $r_1\lambda_1+k_1$ columns sum
to $\lambda_1$, the next $r_2\lambda_2+k_2$ columns sum to
$\lambda_2$, etc. Since $\sum \lambda_i=n$, if the conditions on the
columns is satisfied then the entries will automatically sum to $n$.
By elementary and well-known reasoning, the number of ways to write
$\lambda_i$ as an ordered sum of $(rn+k)(r_in+k_i)$ nonnegative
integers is $\binom{(rn+k)(r_i\lambda_i+k_i)+\lambda_i-1}{\lambda_i}$,
and the proof follows.
\qed

\medskip
We now consider the expansion of the symmetric functions $p_\lambda$,
$h_\lambda$, and $e_\lambda$ in terms of the basis $\fnr$ (for fixed
$r$, which we may even regard as an indeterminate). 

\begin{prop} \label{prop:pn}
For $n\geq 1$ we have 
	\begin{align*}
	     F_n^{(r,-rn-1)} \  = &\ (-1)^n(rn+1)e_n\\
         F_n^{(r,-rn)} \ = &\ -rp_n\\
         F_n^{(r,-rn+1)} \ = &\ (1-rn)h_n.
    \end{align*} 
\end{prop}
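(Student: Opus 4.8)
The plan is to specialize the several basis expansions collected in Theorem~\ref{thm:cprk}, which by construction hold for all integer (indeed indeterminate) $k$, since $F_n^{(r,k)}$ is defined for $k\le 0$ through \eqref{eq:hlambda}. Each of the three target identities is exactly the specialization of $k$ that makes $rn+k$ equal to $1$, $0$, or $-1$, and the common mechanism is that at these values every partition $\lambda\vdash n$ with $\ell(\lambda)\ge 2$ contributes nothing, so that only the single part $\lambda=(n)$ survives and produces $h_n$, $p_n$, or $e_n$ respectively.

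For $k=-rn+1$ and $k=-rn-1$ I would use the generating-function form \eqref{eq:[t^n]H}, namely $F_n^{(r,k)}=\frac{k}{rn+k}[t^n]H(t)^{rn+k}$. When $k=-rn+1$ we have $rn+k=1$, so the prefactor is $1-rn$ and $[t^n]H(t)=h_n$, giving $F_n^{(r,-rn+1)}=(1-rn)h_n$. When $k=-rn-1$ we have $rn+k=-1$; since $H(t)^{-1}=E(-t)=\sum_{m\ge 0}(-1)^m e_m t^m$ by \eqref{eq:et}, the prefactor $\frac{-rn-1}{-1}=rn+1$ together with $[t^n]H(t)^{-1}=(-1)^n e_n$ yields $F_n^{(r,-rn-1)}=(-1)^n(rn+1)e_n$. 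Both identities are immediate once one records the observation $H(t)^{-1}=E(-t)$.

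The middle case $k=-rn$ is the delicate one, because $rn+k=0$ makes both the prefactor $\frac{k}{rn+k}$ and the coefficient $[t^n]H(t)^{0}$ degenerate, so \eqref{eq:[t^n]H} cannot be applied directly. Here I would instead invoke the power-sum expansion \eqref{eq:plambda}, $F_n^{(r,k)}=k\sum_{\lambda\vdash n}z_\lambda^{-1}(rn+k)^{\ell(\lambda)-1}p_\lambda$, read as an identity of symmetric functions whose coefficient of $p_\lambda$ is the \emph{polynomial} $k\,z_\lambda^{-1}(rn+k)^{\ell(\lambda)-1}$ in $k$. For $\ell(\lambda)\ge 2$ this polynomial carries the factor $rn+k$ and hence vanishes at $k=-rn$, whereas for $\lambda=(n)$ the factor $(rn+k)^{0}$ is identically $1$ and the coefficient is $k/n$, equal to $-r$ at $k=-rn$. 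Thus $F_n^{(r,-rn)}=-rp_n$.

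The only genuine obstacle is bookkeeping at these degenerate values: one must ensure that the apparent singularities ($0^{0}$, division by $rn+k$, and multinomials with negative or vanishing entries) are interpreted as specializations of honest polynomials rather than undefined expressions. The cleanest way to dispose of this is to note, directly from \eqref{eq:hlambda}, that each coefficient $\frac{k}{rn+k}\binom{rn+k}{d_1(\lambda),\dots,d_n(\lambda),\,rn+k-\ell(\lambda)}$ simplifies to $\frac{k\,(rn+k-1)(rn+k-2)\cdots(rn+k-\ell(\lambda)+1)}{d_1(\lambda)!\cdots d_n(\lambda)!}$, a bona fide polynomial in $k$ after the factor $rn+k$ cancels. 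Consequently $F_n^{(r,k)}$ is polynomial in $k$, every specialization above is legitimate, and the three claimed identities follow simply by substituting $k=-rn+1$, $k=-rn$, and $k=-rn-1$.
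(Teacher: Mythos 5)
Your proof is correct and follows essentially the same route as the paper, which simply specializes the expansions of Theorem~\ref{thm:cprk} at $k=-rn-1$, $-rn$, $-rn+1$; the paper uses the power-sum expansion \eqref{eq:plambda} for all three identities (invoking $\sum_{\lambda\vdash n}\varepsilon_\lambda z_\lambda^{-1}p_\lambda=e_n$ and $\sum_{\lambda\vdash n}z_\lambda^{-1}p_\lambda=h_n$), whereas you use \eqref{eq:[t^n]H} together with $H(t)^{-1}=E(-t)$ for the two outer cases, an interchangeable variation. Your explicit polynomiality-in-$k$ argument handling the degenerate value $rn+k=0$ is a careful justification of a point the paper leaves implicit.
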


\proof
Putting $k=-rn-1$ in equation~\eqref{eq:plambda} gives
$(-1)^n(rn+1)\sum_{\lambda\vdash
  n}z_\lambda^{-1}$ $\cdot (-1)^{n-\ell(\lambda)}p_\lambda$. It is well-known
that this sum is just $e_n$, and the proof of the first equation
follows. (We could also substitute $k=-rn-1$ in
equation~\eqref{eq:elambda} and simplify.) The other two equations are similar. 
\qed

\medskip
Now by Proposition~\ref{prop:pn} we have (writing $d_i=d_i(\lambda)$) 
\begin{align*}  (-1)^n(rn+1)e_n 
= &\ F_n^{(r,-rn-1)}\\ 
= &\ 
    [t^n]\left(\sum_{i\geq 0} F_i^{(r)}t^i\right)^{-rn-1}\\
     = &\ [t^n]\sum_{j\geq 0}(-1)^j\binom{rn+j}{j}\left( \sum_{i\geq 1}
       F_i^{(r)}t^i\right)^j\\ 
       = &\
    \sum_{a_1+\cdots a_j=n}(-1)^j\binom{rn+j}{j}F_{a_1}^{(r)}\cdots
    F_{a_j}^{(r)}\\ 
    = &\ 
   \sum_{\lambda\vdash n}(-1)^{\ell(\lambda)}
    \binom{rn+\ell(\lambda)}{d_1,d_2,\dots,rn}F_\lambda^{(r)},
\end{align*}
where the penultimate sum is over all $2^{n-1}$ compositions of
$n$. We have therefore expressed $e_n$ as a linear combination of
$F_\lambda^{(r)}$'s. In exactly the same way we obtain
\begin{align*} 
-r p_n   
= &\  \sum_{\lambda\vdash n} (-1)^{\ell(\lambda)}
   \binom{rn+\ell(\lambda)-1}{d_1,d_2,\dots,rn-1}F_\lambda^{(r)}\\
  -(rn-1)h_n  
  = &\ \sum_{\lambda\vdash n} (-1)^{\ell(\lambda)}
   \binom{rn+\ell(\lambda)-2}{d_1,d_2,\dots,rn-2}F_\lambda^{(r)}.
\end{align*}
(For $r=n=1$, the last equation becomes $0=0$, but it is clear that
$h_1= F_1^{(r)}$.)  Since $\{e_\mu\}, \{p_\mu\}, \{h_\mu\}$ and
$\{F_\lambda^{(r)}\}$ are multiplicative bases, we have in principle
expressed each $e_\mu, p_\mu$, and $h_\mu$ as a linear combination of
$F_\lambda^{(r)}$'s.  We leave open, however, whether there is some
more elegant form of these expansions, e.g., a simple combinatorial
interpretation of the coefficients. 

Similarly, since Theorem~\ref{thm:cprk} in the case $k=1$ gives the
expansion of $F_n^{(r)}$ in terms of the multiplicative bases $p_\mu$,
$h_\mu$, and $e_\mu$, we in principle also have an expansion of
$F^{(r)}_\lambda$ in terms of these bases, but perhaps a better
description is available. We cannot expect a simple product formula
for the coefficients in general since for instance the coefficient of
$p_3p_6$ in the power sum expansion of $F^{(1)}_{(3,2,1,1,1,1)}$ is
equal to $2\cdot 7\cdot 157/3$.

\section*{Acknowledgements}

The authors are grateful to the referees and the editor for their very careful and helpful comments.




\end{document}